\theoremstyle{plain}
\newtheorem{theorem}{Theorem}[section]
\newtheorem{lemma}[theorem]{Lemma}
\newtheorem{proposition}[theorem]{Proposition}
\theoremstyle{remark}
\newtheorem{remark}{Remark}
\newcommand{\subscript}[2]{$#1 _ #2$}
\theoremstyle{definition}
\newcommand{\embed}{\hookrightarrow}
\newcommand{\embedembed}{\hookrightarrow\hookrightarrow}
\newcommand\A{\mathcal{A}}
\newcommand\R{\mathbb{R}}
\renewcommand\L{\mathcal{ L}}
\newcommand\e{\varepsilon}
\begin{document}

%\jvol{00} \jnum{00} \jyear{2014} \jmonth{March}

\title[Well-posedness and inverse Robin estimates]{Well-posedness and inverse Robin estimate for a multiscale elliptic/parabolic system}

\begin{abstract}
We establish the well-posedness of a coupled micro-macro parabolic-elliptic system modeling the interplay between two pressures in a gas-liquid mixture close to equilibrium that is filling a porous media with distributed microstructures. Additionally, we prove a local stability estimate for the inverse micro-macro Robin problem, potentially useful in identifying quantitatively a micro-macro interfacial Robin transfer coefficient given microscopic measurements on accessible fixed interfaces. To tackle the solvability issue we use two-scale energy estimates and two-scale regularity/compactness arguments cast in the Schauder's fixed point theorem. A number of auxiliary problems, regularity, and scaling arguments are used in ensuring the suitable Fr\'echet differentiability of the solution and the structure of the inverse stability estimate. 

\end{abstract}

\author{Martin Lind}
\address{Department of Mathematics and Computer Science\\Karlstad University\\
651 88 Karlstad\\ Sweden }
\email{martin.lind@kau.se}

\author{Adrian Muntean}
\address{Department of Mathematics and Computer Science\\Karlstad University\\
651 88 Karlstad\\ Sweden }
\email{adrian.muntean@kau.se}

\author{Omar Richardson}
\address{Department of Mathematics and Computer Science\\Karlstad University\\
651 88 Karlstad\\ Sweden }
\email{omar.richardson@kau.se}

\subjclass[2010]{76S05, 35B27, 35R10, 35R30, 86A22}

\keywords{Upscaled porous media, two-scale PDE, inverse micro-macro Robin problem}

\maketitle

\section{Introduction}

We are interested in developing evolution equations able to describe multiscale spatial interactions in gas-liquid mixtures, targeting a rigorous mathematical justification of Richards-like equations -  upscaled model equations generally chosen in a rather {\em ad hoc} manner by the engineering communities to describe the motion of flow in unsaturated porous media. The main issue is that  one lacks a rigorous derivation of the Darcy's law for such flow (see \cite{Hornung2012} (chapter 1) for a derivation via periodic homogenization techniques of the Darcy law for the saturated case).  

If air-water interfaces can be assume to be stagnant for a reasonable time span, then averaging techniques for materials with locally periodic microstructures (compare e.g. \cite{Chechkin1998}) lead in suitable scaling regimes to what we refer here as {\em two-pressure evolution systems}.  These are normally coupled parabolic-elliptic systems responsible for the joint evolution in time $t\in (0,T)$ ($T<+\infty$) of a parameter-dependent {\em microscopic pressure} $R\rho (t,x,y)$ evolving with respect to $y\in Y\subset \mathbb{R}^d$ for any given macroscopic spatial position $x\in \Omega$ and a {\em macroscopic pressure} $\pi(t,x)$ with $x\in \Omega$ for any given $t$. Here $R$ denotes the universal constant of gases. The two-scale geometry we have in mind is depicted in Figure 1 below.

\begin{figure}[H]
\centering
\includegraphics[scale=0.8]{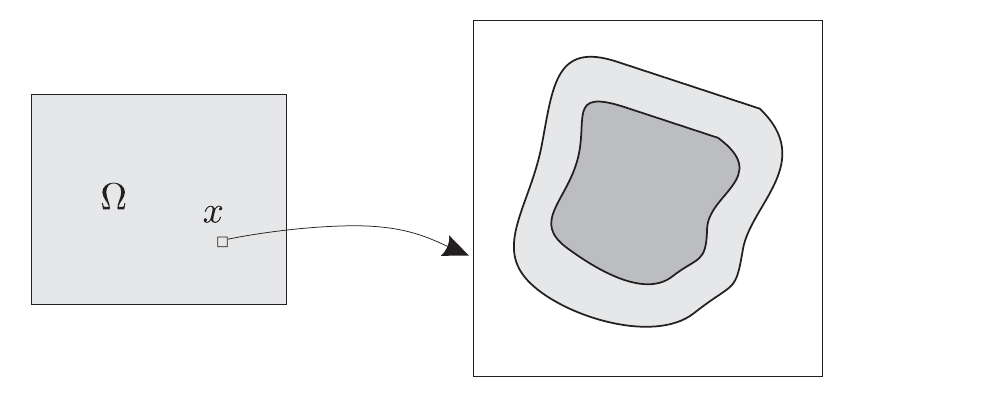}
\caption{The macroscopic domain $\Omega$ and microscopic pore $Y$ at $x\in\Omega$}
\end{figure} 

To cast the physical problem in mathematical terms as stated in \eqref{generalProblem}, we need a number of dimensional constant parameters ($A$ (gas permeability), $D$ (diffusion coefficient for the gaseous species),  $p_F$ (atmospheric pressure), $\rho_F$ (gas density))  and dimensional functions ($k$ (Robin coefficient) and $\rho_I$ (initial liquid density)). It is worth noting that excepting the Robin coefficient $k$, all the model parameters and functions are either known or can be accessed directly via measurements. Getting grip on a priori values of $k$ is more intricate simply because this coefficient is defined on the Robin part of the boundary of $\partial Y$, say $\Gamma_R$, where the micro-macro information transfer takes actively place. The Neumann part of the boundary  $\Gamma_N:=\partial Y-\Gamma_R$ is assumed to be accessible via measurements, while  $\Gamma_R$ is thought here as unaccessible. 

Our aim is twofold: 
\begin{itemize}
\item[(1)] ensure the well-posedness in a suitable sense of our two-pressure system with $k$ taken to be known;
\item[(2)] prove stability estimates with respect to $k$ for the inverse micro-macro Robin problem ($k$ is now unknown, but  measured values of the microscopic pressure are available on  $\Gamma_N$).
\end{itemize}
The main results reported here are Theorem \ref{existenceThm} (the weak solvability of \eqref{generalProblem}) and Theorem \ref{mainTeo} (the local stability for the inverse micro-macro Robin problem).

The choice of problem and approach is in line with other investigations running for two-scale systems, or systems with distributed microstructures, like \cite{feedback,sebamPhD,PesShow}.  As far as we are aware, this is for the first time that an inverse Robin problem is treated in a two-scale setting. A remotely connected single-scale inverse Robin problem is treated in \cite{nakamura15}.

\section{Problem formulation}
%We look at a coupled two-scale problem pressure $p(t,x)$ evolves in macroscopic domain $\Omega \subset \R^d$ and density $\rho(t,x,y)$ evolves in a microscopic domain $Y \subset [0,1]^n$.
%The system, denoted by (P1), can be expressed as

%\begin{equation}
  %  \mbox{ (P1) }
 %   \begin{cases}
%        -A\rho_F \Delta_x p = C(p,\rho)p & \mbox{ in }\Omega,\\
  %      \partial_t\rho + \nabla_y\left( -D\nabla_y\rho \right) = 0 %& \mbox{ in } Y\times \Omega,
%    \end{cases}
%    \label{eq:p1}
%\end{equation}
%with
%\begin{equation}
 %   C(t,x) = \int_{Y_D}\left( 1-\frac{\rho}{\rho_F} %\right)D\nabla_y \rho\cdot n\,d\sigma.
 %   \label{eq:p1_rhs}
%\end{equation}
%We impose boundary conditions
%\begin{equation}
 %   \begin{cases}
   %     -D \nabla_y\rho(t,x,y) \cdot n=\kappa \left(p(t,x) - %R\rho(t,x,y)\right) & \mbox{ on } \partial Y_R,\\
        %-D \nabla_y\rho(t,x,y) \cdot n=0 & \mbox{ on } \partial %Y_N,\\
 %       p = p_F & \mbox{ on } \partial\Omega,\\
%    \end{cases}
%    \label{eq:p1_bc}
%\end{equation}
%and initial condition
%\begin{equation}
  %  \rho(0,x,y) = \rho_I(x,y).
%    \label{eq:p1_init}
%\end{equation}
%The initial condition for $p$ follows from the coupling between $\rho$ and $p$.

We shall consider the following parabolic-elliptic problem posed on two spatial scales $x\in\Omega$ and $y\in Y$.
\begin{equation}
\label{generalProblem}
\begin{cases}
-A\rho_F\Delta_x\pi=f(\pi,\rho) & \mbox{ in }\Omega\\
\partial_t\rho-D\Delta_y\rho = 0 & \mbox{ in }\Omega\times Y\\
-D\nabla_y\rho\cdot n_y=k(\pi+p_F-R\rho)&\mbox{ on } \Omega\times\Gamma_R\\
-D\nabla_y\rho\cdot n_y=0&\mbox{ on }\Omega\times\Gamma_N\\
\pi=0& \mbox{ at }\partial\Omega\\
\rho(t=0)=\rho_I(x,y)&\mbox{ in } \Omega\times Y,
\end{cases}
\end{equation}
where the parameters, coefficients and the nonlinear function $f$ satisfies the assumptions discussed below (see Section 2.1). The initial condition for $p$ follows from the coupling between $\rho$ and $p$. 

A prominent role in this paper is played by the micro-macro Robin transfer coefficient $k$, which is selected from the following set
\begin{equation}
%\label{eq:kparameter}
\nonumber
\mathcal{K}:=\{k\in\L^2(\Gamma_R):0<\underline{k}\le k(y)\le\bar{k}\mbox{ for } y\in\Gamma_R\}. 
\end{equation}

\subsection{Assumptions}

\begin{enumerate}[label=(\subscript{A}{\arabic*})]
\item\label{domains} The domains $\Omega,Y$ have Lipschitz continuous boundaries.
\item\label{parameters} The parameters satisfy $A,D,\rho_F,R\in(0,\infty)$.
\item\label{initial} The initial value $\rho_I\in H^1(\Omega\times Y)$.
\item\label{scale} $f(\lambda u,\mu v)=\lambda^\alpha\mu^\beta f(u,v)$ where $\alpha+\beta=1$, $\alpha,\beta>0$.
\item\label{structuralAssump} 
There is a structural constant $C^*>0$ such that 
\begin{equation}
\nonumber
\int_{\Omega}|f(u_1,v)-f(u_2,v)|^2dx\le C^*\|u_1-v_2\|^2_{\L^2(\Omega)}
\end{equation}
uniformly in $v\in\L^2(\Omega;\L^2(Y))$.
\item\label{secondARG} There is a constant $C>0$ such that
$$
\int_\Omega f(u,v)^2dx\le C\|v\|^2_{\L^2(\Omega;H^1(Y))}.
$$
\item\label{structuralAssump2}
The constant $C^*$ in \ref{structuralAssump} satisfies
\begin{equation}
\nonumber
C^*c_P(\Omega)<1,
\end{equation}
where $c_P(\Omega)$ is the Poincar\'{e} constant of the domain $\Omega$ (see Proposition \ref{poincare} below). 
\end{enumerate}

\begin{remark}
Assumptions \ref{domains}-\ref{initial} have clear geometrical or physical meanings, while \ref{scale}-\ref{structuralAssump2} are technical. The assumption \ref{structuralAssump2} is only used when deriving uniqueness of the weak solution to (\ref{generalProblem}). Note also that for some special classes of domains, the Poincar\'{e} constant can be quantitatively estimated, see e.g. \cite{mikhlin81}.
\end{remark}

\subsection{Auxiliary results}

In this section, we state some auxiliary results that will be useful in this context.

\begin{proposition}[Poincar\'{e}'s inequality]
\label{poincare}
Let $\Omega\subset\R^d$ be a fixed domain and denote by $c_P(\Omega)$ the smallest constant such that
\begin{equation}
\nonumber
\|u\|^2_{\L^2(\Omega)}\le c_P(\Omega)\|\nabla_xu\|^2_{\L^2(\Omega)}
\end{equation}
hold for all $u\in H_0^1(\Omega)$. The constant $c_P(\Omega)$ is called the Poincar\'{e} constant of the domain $\Omega$.
\end{proposition}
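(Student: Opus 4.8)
The plan is to treat this as the standard fact that on a bounded Lipschitz domain the Dirichlet energy controls the $\L^2$-norm, and to read off $c_P(\Omega)$ as the sharp constant in that inequality. Since $C_c^\infty(\Omega)$ is dense in $H_0^1(\Omega)$, it suffices to prove the inequality for $u\in C_c^\infty(\Omega)$ and then pass to the limit; both sides are continuous with respect to the $H^1$-norm, so the bound extends to all of $H_0^1(\Omega)$. The object to control is the Rayleigh quotient $\|u\|_{\L^2(\Omega)}^2/\|\nabla_x u\|_{\L^2(\Omega)}^2$, and $c_P(\Omega)$ is by definition its supremum over $u\in H_0^1(\Omega)\setminus\{0\}$; the whole content of the proposition is that this supremum is finite, for then it is automatically the smallest constant for which the stated inequality holds for every admissible $u$.

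First I would establish finiteness by the elementary one-dimensional argument. Using \ref{domains}, $\Omega$ is bounded, hence contained in a slab $\{a<x_1<b\}$. For $u\in C_c^\infty(\Omega)$, extended by zero to the slab, the fundamental theorem of calculus gives $u(x)=\int_a^{x_1}\partial_{x_1}u\,ds$, and Cauchy--Schwarz yields $|u(x)|^2\le (b-a)\int_a^b|\partial_{x_1}u|^2\,ds$. Integrating in $x$ over $\Omega$ produces $\|u\|_{\L^2(\Omega)}^2\le (b-a)^2\|\partial_{x_1}u\|_{\L^2(\Omega)}^2\le (b-a)^2\|\nabla_x u\|_{\L^2(\Omega)}^2$. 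Thus the supremum defining $c_P(\Omega)$ is finite and dominated by $(b-a)^2$, which settles the existence of a smallest admissible constant.

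If in addition one wants the sharp constant to be attained, I would run a compactness argument identifying $c_P(\Omega)^{-1}$ with $\lambda_1$, the first Dirichlet eigenvalue of $-\Delta$ on $\Omega$. Take a minimizing sequence for $\|\nabla_x u\|_{\L^2(\Omega)}^2$ normalized by $\|u_n\|_{\L^2(\Omega)}=1$; the gradients stay bounded, so $(u_n)$ is bounded in $H_0^1(\Omega)$ and I extract $u_n\rightharpoonup u$ weakly in $H_0^1(\Omega)$. By the Rellich--Kondrachov theorem — applicable because $\Omega$ is bounded Lipschitz by \ref{domains} — the embedding $H_0^1(\Omega)\embed \L^2(\Omega)$ is compact, so $u_n\to u$ strongly in $\L^2(\Omega)$ and $\|u\|_{\L^2(\Omega)}=1$, whence $u\neq0$. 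Weak lower semicontinuity of $u\mapsto\|\nabla_x u\|_{\L^2(\Omega)}^2$ then gives $\|\nabla_x u\|_{\L^2(\Omega)}^2\le\liminf\|\nabla_x u_n\|_{\L^2(\Omega)}^2=\lambda_1$, so $u$ is a minimizer and $\lambda_1>0$ (a vanishing $\lambda_1$ would force $\nabla_x u=0$, hence $u$ constant, hence $u=0$ in $H_0^1(\Omega)$, contradicting $\|u\|_{\L^2(\Omega)}=1$).

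The elementary step has no genuine obstacle beyond bookkeeping. The point that really requires the geometric hypothesis \ref{domains} is the compactness/positivity step, where the main difficulty is excluding the ``constant direction'' from the minimizing sequence: this is exactly what the compact embedding $H_0^1(\Omega)\embed \L^2(\Omega)$ buys, since it prevents the minimizer from drifting toward a nonzero constant (which does not belong to $H_0^1(\Omega)$) while its $\L^2$-norm is pinned. It is also the single place where the boundedness and regularity of $\partial\Omega$ are indispensable, the inequality failing outright on domains unbounded in every direction.
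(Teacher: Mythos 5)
The paper never proves this proposition: it appears among the auxiliary results as a classical fact, serving mainly to \emph{define} the constant $c_P(\Omega)$ that is later used in \ref{structuralAssump2} and in the uniqueness proof, with the remark after the assumptions pointing to \cite{mikhlin81} only for quantitative estimates of the constant. So there is no ``paper approach'' to compare against; what you supply is the standard textbook proof, and it is correct. Your slab argument (fundamental theorem of calculus plus Cauchy--Schwarz for $u\in C_c^\infty(\Omega)$, then density of $C_c^\infty(\Omega)$ in $H_0^1(\Omega)$) establishes finiteness of the sharp constant with the explicit bound $c_P(\Omega)\le(b-a)^2$, using only that $\Omega$ lies in a slab; your Rellich--Kondrachov/weak lower semicontinuity argument additionally identifies $c_P(\Omega)^{-1}$ with the first Dirichlet eigenvalue $\lambda_1$ and shows the sharp constant is attained, which is more than the proposition literally asks but is exactly what justifies calling $c_P(\Omega)$ the \emph{smallest} such constant.

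Two of your side claims should be corrected, though neither affects the validity of the proof. First, boundedness of $\Omega$ is not contained in \ref{domains} (which only requires Lipschitz boundaries) nor in the proposition's hypothesis of a ``fixed domain''; since the inequality is false for $\Omega=\R^d$, you should state boundedness (or boundedness in one direction) as an explicit standing assumption rather than attribute it to \ref{domains}. Second, for the zero-trace space $H_0^1(\Omega)$ the Lipschitz regularity of $\partial\Omega$ is not ``indispensable'': extension by zero reduces the compact embedding $H_0^1(\Omega)\embed\L^2(\Omega)$ to the case of a ball, so it holds for arbitrary bounded open sets; boundary regularity becomes essential only for the mean-zero (Poincar\'e--Wirtinger) inequality on $H^1(\Omega)$ and for trace-type results such as Propositions \ref{sub:trace_theorem} and \ref{trace}. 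Relatedly, the inequality does not fail on \emph{every} domain unbounded in all directions (the complement in $\R^2$ of a periodic array of closed disks satisfies the $H_0^1$-Poincar\'e inequality by a tiling argument), so your closing remark is overstated.
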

\begin{proposition}[Interpolation-trace inequality]
\label{sub:trace_theorem}
Assume that $Y\subset\R^d$ is a Lipschitz domain and $u\in\L^2(\Omega;H^1(Y))$. For any $\rho>0$ we have
\begin{equation}
\nonumber
\int_\Omega\int_{\partial Y}u^2d\sigma_y dx\le\rho\int_\Omega\int_Y|\nabla_yu|^2dydx+c_\rho\int_\Omega\int_Y|u|^2dydx,
\end{equation}
where $c_\rho\sim1/\rho$. In particular,
\begin{equation}
\nonumber
\|u\|_{\L^2(\Omega;\L^2(\partial Y))}\le c\|u\|_{\L^2(\Omega;H^1(Y))}.
\end{equation}
\end{proposition}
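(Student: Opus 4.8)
The plan is to reduce the two-scale statement to a one-cell inequality on $Y$ and then integrate in $x$. Since $u\in\L^2(\Omega;H^1(Y))$, for almost every $x\in\Omega$ the slice $u(x,\cdot)$ belongs to $H^1(Y)$; if I can establish, for every $v\in H^1(Y)$ and every $\rho>0$, the single-cell interpolation-trace inequality
\begin{equation*}
\int_{\partial Y}v^2\,d\sigma_y\le\rho\int_Y|\nabla_yv|^2\,dy+c_\rho\int_Y|v|^2\,dy,\qquad c_\rho\sim1/\rho,
\end{equation*}
with $c_\rho$ independent of $v$, then applying it to $v=u(x,\cdot)$ and integrating over $\Omega$ (by Tonelli, the integrand being nonnegative and measurable in $x$) yields exactly the claimed estimate. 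The second, qualitative, assertion then follows by fixing $\rho=1$ and bounding both resulting terms by $\|u\|^2_{\L^2(\Omega;H^1(Y))}$.

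For the core single-cell inequality I would use a Rellich-type divergence identity. Because $Y$ is a bounded Lipschitz domain, it satisfies a uniform cone condition, and one can construct a vector field $\xi\in W^{1,\infty}(Y;\R^d)$ that is uniformly transversal to the boundary, i.e. $\xi\cdot n_y\ge1$ for $\sigma_y$-a.e. point of $\partial Y$ (patch local transversal directions with a partition of unity subordinate to a Lipschitz atlas). Applying the divergence theorem to $v^2\xi$ for $v\in H^1(Y)$ (first for smooth $v$, then by density) gives
\begin{equation*}
\int_{\partial Y}v^2\,d\sigma_y\le\int_{\partial Y}v^2\,(\xi\cdot n_y)\,d\sigma_y=\int_Y v^2\,\mathrm{div}\,\xi\,dy+2\int_Y v\,(\xi\cdot\nabla_yv)\,dy.
\end{equation*}

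The remaining step is to absorb the cross term. Bounding $|\xi|$ and $|\mathrm{div}\,\xi|$ by $\|\xi\|_{W^{1,\infty}}$ and applying Young's inequality to the last integral in the form $2ab\le\rho a^2+\rho^{-1}b^2$ with $a=|\nabla_yv|$ and $b=\|\xi\|_\infty|v|$ produces the gradient term with prefactor $\rho$ and a zeroth-order term whose constant is $\|\mathrm{div}\,\xi\|_\infty+\|\xi\|_\infty^2/\rho$, which is precisely the asserted dependence $c_\rho\sim1/\rho$. The step I expect to be the main obstacle is the construction of the globally transversal field $\xi$ with explicit control of $\|\xi\|_{W^{1,\infty}}$ in the Lipschitz (non-smooth) setting; an alternative route, a compactness/Ehrling argument based on the compact trace embedding $H^1(Y)\embed\embed\L^2(\partial Y)$, proves the inequality for each fixed $\rho$ more cheaply but does not by itself deliver the quantitative rate $c_\rho\sim1/\rho$, so the divergence-identity approach is the one I would commit to.
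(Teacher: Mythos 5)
Your proof is correct. Note, however, that the paper does not prove this proposition at all: it is stated in Section 2.2 as a classical auxiliary result (alongside Poincar\'e's inequality and the trace/Sobolev embeddings) and simply invoked later, so there is no in-paper argument to compare against. What you supply is the standard proof of this inequality: reduce to a single-cell estimate by slicing and Tonelli, then obtain the cell inequality from the Gauss--Green identity applied to $v^2\xi$ with a vector field $\xi$ uniformly transversal to $\partial Y$, absorbing the cross term by Young's inequality to get the $\rho$--$1/\rho$ balance. The one step you flag as the likely obstacle --- constructing $\xi\in W^{1,\infty}$ with $\xi\cdot n_y\ge 1$ a.e.\ on a Lipschitz boundary --- is itself a known lemma (one may even take $\xi$ smooth; see Grisvard, \emph{Elliptic Problems in Nonsmooth Domains}, Lemma 1.5.1.9, proved exactly by the chart-plus-partition-of-unity patching you describe, using that the local graph direction satisfies $e_d\cdot n\ge(1+L^2)^{-1/2}$), so your argument is complete modulo a citation. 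Two minor points worth tightening: the density step should either invoke the classical continuity of the trace operator $H^1(Y)\to\L^2(\partial Y)$ (a weaker, standard fact, so no circularity) or apply your inequality to differences of smooth approximants to see their traces are Cauchy in $\L^2(\partial Y)$; and for the integration in $x$, strong measurability of $x\mapsto u(x,\cdot)|_{\partial Y}$ follows since it is the composition of a bounded linear operator with a strongly measurable Bochner function. Your remark that a compactness/Ehrling argument would give each fixed $\rho$ but not the explicit rate $c_\rho\sim 1/\rho$ is also accurate.
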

The next result provides a useful equivalent norm on $H^1$.
\begin{proposition}
\label{equivalenSob}
Let $U\subset\R^d$ be a domain and $\Gamma\subset\partial Y$ where $\Gamma$ has positive $(d-1)$-dimensional surface measure. Then there are constants $c_1,c_2$ such that
\begin{equation}
\nonumber
c_1\|u\|^2_{H^1(U)}\le\int_\Gamma u^2d\sigma+\|\nabla_xu\|^2_{\L^2(\Omega)}\le c_2\|u\|^2_{H^1(U)}.
\end{equation}
\end{proposition}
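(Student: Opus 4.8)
The plan is to prove the two inequalities separately, the right-hand one being routine and the left-hand one carrying all the difficulty. For the upper bound I would simply note that $\|\nabla_x u\|^2_{\L^2(U)}\le\|u\|^2_{H^1(U)}$ trivially, while the trace inequality of Proposition \ref{sub:trace_theorem} (or the standard trace theorem for a Lipschitz domain) gives $\int_\Gamma u^2\,d\sigma\le\|u\|^2_{\L^2(\partial U)}\le C\|u\|^2_{H^1(U)}$. Adding these produces the constant $c_2$. For the lower bound, observe that the term $\|\nabla_x u\|^2_{\L^2(U)}$ already sits on the right, so it suffices to establish the Poincar\'e-type estimate
\begin{equation}
\nonumber
\|u\|^2_{\L^2(U)}\le C\left(\int_\Gamma u^2\,d\sigma+\|\nabla_x u\|^2_{\L^2(U)}\right)\quad\text{for all }u\in H^1(U);
\end{equation}
adding $\|\nabla_x u\|^2_{\L^2(U)}$ to both sides then yields the left inequality with a suitable $c_1$.

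The natural route to this estimate is a compactness--contradiction argument. Suppose it fails; then there is a sequence $(u_n)\subset H^1(U)$ with $\|u_n\|_{\L^2(U)}=1$ for every $n$ but $\int_\Gamma u_n^2\,d\sigma+\|\nabla_x u_n\|^2_{\L^2(U)}\to0$. In particular $(u_n)$ is bounded in $H^1(U)$, so by the Rellich--Kondrachov theorem, valid on the bounded Lipschitz domain $U$, a subsequence converges strongly in $\L^2(U)$ and weakly in $H^1(U)$ to some limit $u$. Since $\|\nabla_x u_n\|_{\L^2(U)}\to0$, the weak limit of the gradients vanishes, forcing $\nabla_x u=0$, so that $u$ is a constant; strong $\L^2$-convergence preserves $\|u\|_{\L^2(U)}=1$, whence $u\ne0$.

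It then remains to contradict the boundary information. Using the continuity --- indeed compactness --- of the trace operator $H^1(U)\to\L^2(\partial U)$ on a Lipschitz domain, the traces $u_n|_\Gamma$ converge to $u|_\Gamma$ in $\L^2(\Gamma)$, while $\int_\Gamma u_n^2\,d\sigma\to0$ forces $u=0$ $\sigma$-almost everywhere on $\Gamma$. Because $\Gamma$ carries positive $(d-1)$-dimensional surface measure and $u$ is constant, that constant must be zero, contradicting $\|u\|_{\L^2(U)}=1$. This closes the argument.

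The step I expect to be the main obstacle, and the one deserving care in the writeup, is the identification of the weak limit as a nonzero constant together with the passage to the boundary: one must invoke Rellich compactness to upgrade boundedness to strong $\L^2$-convergence, use weak convergence of the gradients to conclude $\nabla_x u=0$, and rely on the compactness (or at least continuity) of the trace map to annihilate the boundary integral in the limit. Two hypotheses implicit in the statement should be made explicit in the proof: that $U$ is bounded and connected, so that a function with vanishing gradient is a single constant, and that $\Gamma\subset\partial U$ has positive surface measure, so that the vanishing of $\int_\Gamma u^2\,d\sigma$ kills that constant; if $U$ were to decompose into several components, one would additionally need $\Gamma$ to meet each of them.
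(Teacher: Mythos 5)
Your proof is correct, but there is nothing in the paper to compare it against: Proposition \ref{equivalenSob} is stated in the ``Auxiliary results'' subsection without proof, as a classical norm-equivalence (Friedrichs/Poincar\'e-type) fact. What you have written is the standard compactness--contradiction (Peetre--Tartar) argument, and it works: the upper bound follows from the trace theorem exactly as you say, and for the lower bound your normalization $\|u_n\|_{\L^2(U)}=1$ with $\int_\Gamma u_n^2\,d\sigma+\|\nabla_x u_n\|^2_{\L^2(U)}\to 0$, Rellich--Kondrachov, identification of the limit as a nonzero constant, and annihilation of that constant by the positive surface measure of $\Gamma$ is precisely the textbook proof. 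Two small remarks. First, you do not actually need compactness of the trace operator: continuity suffices, since $u_n|_\Gamma\to 0$ strongly in $\L^2(\Gamma)$ while $u_n|_\Gamma\rightharpoonup u|_\Gamma$ weakly (traces of a weakly convergent $H^1$ sequence converge weakly in $\L^2(\partial U)$), and uniqueness of weak limits gives $u|_\Gamma=0$. Second, you are right to flag the implicit hypotheses --- $U$ bounded, Lipschitz and connected, and $\Gamma$ of positive measure (meeting every component if $U$ were disconnected) --- and also to read the statement's evident typos ($\Gamma\subset\partial Y$ should be $\Gamma\subset\partial U$, and $\L^2(\Omega)$ should be $\L^2(U)$) in the only sensible way; the paper itself is silent on all of this because it treats the result as standard.
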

We shall also need the following two results, the first a Sobolev-type embedding and the second a simple trace theorem.
\begin{proposition}
\label{sobolev}
Assume that $U\subset\R^d$, then $H^{d}(U)\subset\L^\infty(U)$ and
\begin{equation}
\nonumber
\|u\|_{\L^\infty(U)}\le C\|u\|_{H^d(U)}.
\end{equation}
\end{proposition}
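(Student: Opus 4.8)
The plan is to reduce the statement to the classical Sobolev embedding on all of $\R^d$ via the Fourier transform, and then to transfer it to the domain $U$ through an extension operator. First I would treat the model case $U=\R^d$. For $u\in H^d(\R^d)$ I would use the Fourier-analytic description of the norm, $\|u\|_{H^d(\R^d)}^2\sim\int_{\R^d}(1+|\xi|^2)^d|\hat u(\xi)|^2\,d\xi$, together with the inversion formula. Estimating pointwise and inserting the weight $(1+|\xi|^2)^{\pm d/2}$ gives
\begin{equation}
\nonumber
|u(x)|\le\int_{\R^d}|\hat u(\xi)|\,d\xi=\int_{\R^d}(1+|\xi|^2)^{d/2}|\hat u(\xi)|\cdot(1+|\xi|^2)^{-d/2}\,d\xi,
\end{equation}
and Cauchy--Schwarz factors the right-hand side as $\|u\|_{H^d(\R^d)}$ times $\left(\int_{\R^d}(1+|\xi|^2)^{-d}\,d\xi\right)^{1/2}$.

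The heart of the matter is that this last integral converges precisely because the exponent $d$ lies strictly above the critical value $d/2$: in polar coordinates the integrand decays like $r^{-2d}\,r^{d-1}=r^{-d-1}$ as $r\to\infty$, which is integrable for every $d\ge1$. Hence $\|u\|_{\L^\infty(\R^d)}\le C\|u\|_{H^d(\R^d)}$ with $C=C(d)$. As a bonus, the absolute convergence of the Fourier integral shows that $u$ admits a continuous representative, but only the $\L^\infty$ bound is required here.

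It then remains to pass from $\R^d$ to $U$. Invoking a bounded linear extension operator $E\colon H^d(U)\to H^d(\R^d)$ with $Eu|_U=u$ and $\|Eu\|_{H^d(\R^d)}\le C\|u\|_{H^d(U)}$, one concludes
\begin{equation}
\nonumber
\|u\|_{\L^\infty(U)}\le\|Eu\|_{\L^\infty(\R^d)}\le C\|Eu\|_{H^d(\R^d)}\le C\|u\|_{H^d(U)}.
\end{equation}
I expect the main (and essentially only) obstacle to be the availability of such an extension operator: for a completely arbitrary open set the embedding can fail, so some boundary regularity must be assumed on $U$. This is harmless in the present setting, since the domains actually used are Lipschitz by assumption \ref{domains}, and Lipschitz domains admit a universal Stein extension operator bounded on every $H^k$; I would simply record this regularity requirement on $U$ explicitly in the statement.
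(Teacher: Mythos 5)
Your proof is correct. The paper itself offers no proof of Proposition \ref{sobolev} at all --- it is stated as a classical Sobolev-type embedding and used as a black box --- so there is nothing to diverge from; your Fourier-multiplier argument plus Stein extension is the standard route and is carried out correctly: the Cauchy--Schwarz splitting with the weight $(1+|\xi|^2)^{\pm d/2}$ works precisely because $2d>d$, i.e.\ the exponent $d$ exceeds the critical value $d/2$ for every $d\ge 1$, and the absolute convergence of $\int|\hat u|\,d\xi$ also justifies applying the inversion formula to an $H^d$ function (its Fourier transform lies in $\L^1$, so $u$ agrees a.e.\ with the continuous inverse transform, which is what the $\L^\infty$ bound means). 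Your one substantive observation is also well taken and worth keeping: as literally stated, ``Assume that $U\subset\R^d$'' is insufficient, since the embedding can fail on irregular open sets (e.g.\ domains with exponential cusps), and the argument genuinely needs $U$ to admit a bounded extension operator $E\colon H^d(U)\to H^d(\R^d)$. This is harmless in context because the proposition is only ever applied to the domains of the paper, which are Lipschitz by \ref{domains} and hence Stein extension domains for every $H^k$; but flagging the hidden regularity hypothesis is an improvement on the paper's formulation rather than a defect of your proof.
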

\begin{proposition}
\label{trace}
Assume that $U\subset\R^d$ and $\Gamma\subset\partial U$ is Lipschitz continuous. Then 
$$
H^{d+1}(U)\subset H^d(\Gamma)
$$
and
$$
\|u\|_{H^d(\Gamma)}\le c\|u\|_{H^{d+1}(U)}.
$$
\end{proposition}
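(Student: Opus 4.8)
The plan is to obtain the claim by composing three bounded maps: the classical trace operator, the restriction from $\partial U$ to $\Gamma$, and a Sobolev embedding on the boundary manifold. Concretely, I would first invoke the standard trace theorem, which asserts that for $s>1/2$ the trace operator $\gamma\colon H^s(U)\to H^{s-1/2}(\partial U)$ is bounded. Taking $s=d+1$ gives a constant $c_1$ with $\|\gamma u\|_{H^{d+1/2}(\partial U)}\le c_1\|u\|_{H^{d+1}(U)}$, so the trace of any $u\in H^{d+1}(U)$ already lives in a Sobolev space of order strictly larger than $d$ on $\partial U$.

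Next I would pass from $\partial U$ to the subset $\Gamma$. Since $\Gamma\subset\partial U$, restriction is a bounded map, so that (writing $u$ also for its trace) one has $\|u\|_{H^{d+1/2}(\Gamma)}\le c\|u\|_{H^{d+1/2}(\partial U)}$; this is because the intrinsic Sobolev (semi)norms on $\Gamma$ integrate the same local expressions over a smaller region than those on $\partial U$, and it is restriction rather than extension that is at stake. Finally, since $\Gamma$ is a bounded $(d-1)$-dimensional Lipschitz manifold and $d+\tfrac12>d$, the embedding $H^{d+1/2}(\Gamma)\hookrightarrow H^d(\Gamma)$ is continuous, yielding a constant $c_2$ with $\|u\|_{H^d(\Gamma)}\le c_2\|u\|_{H^{d+1/2}(\Gamma)}$. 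Chaining the three inequalities produces $\|u\|_{H^d(\Gamma)}\le c\|u\|_{H^{d+1}(U)}$, which is exactly the assertion and in particular shows $H^{d+1}(U)\subset H^d(\Gamma)$.

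The step I expect to be the genuine obstacle is the high-order trace estimate of the first paragraph on a merely Lipschitz boundary. The usual route is to cover $\partial U$ by finitely many coordinate patches in which the boundary is the graph of a Lipschitz function, introduce a subordinate partition of unity, flatten each patch onto a piece of the hyperplane $\{y_d=0\}$, and there exploit the Fourier-transform characterization of the trace on the half-space, where the bound $\|v(\cdot,0)\|_{H^{s-1/2}(\R^{d-1})}\le c\|v\|_{H^{s}(\R^d_+)}$ is classical. The care lies in verifying that the chart changes preserve $H^{d+1}$-regularity; for such a high order this is delicate on Lipschitz charts, and in practice one either restricts attention to the range of $s$ compatible with the available boundary regularity or tacitly strengthens the smoothness of $\partial U$. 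For the present purposes it suffices that this chain of bounded maps exists, so I would either cite the trace theorem in the required range or record the additional boundary smoothness implicitly used.
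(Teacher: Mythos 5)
The paper offers no proof of Proposition \ref{trace}: it is introduced as ``a simple trace theorem'' and used as a black box, with no citation, so there is no internal argument to compare yours against. Your three-step composition --- the trace map $H^{d+1}(U)\to H^{d+1/2}(\partial U)$, then restriction from $\partial U$ to $\Gamma$, then the embedding $H^{d+1/2}(\Gamma)\hookrightarrow H^d(\Gamma)$ on the bounded $(d-1)$-dimensional manifold $\Gamma$ --- is the standard route, each link is bounded, and chaining them gives exactly the claimed inequality. You have also put your finger on the one genuine difficulty, and it is worth stressing that this is a defect of the statement as formulated rather than of your proof: on a merely Lipschitz boundary the trace operator $H^s(U)\to H^{s-1/2}(\partial U)$ is bounded only in the range $1/2<s<3/2$, and for $d\ge 2$ the target space $H^{d+1/2}(\partial U)$ --- indeed already the space $H^d(\Gamma)$ appearing in the proposition --- is not invariantly defined, because Lipschitz changes of charts do not preserve Sobolev regularity of order above one. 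So the hypothesis ``$\Gamma$ Lipschitz'' must in effect be strengthened to something like $C^{d,1}$ regularity of $\partial U$ near $\Gamma$ (or the proposition read as a citation of the smooth-boundary trace theorem), after which your argument goes through verbatim; since the result is only applied to the fixed reference pore $Y$, whose geometry one is free to take smooth, this strengthening costs the paper nothing, but your closing caveat is exactly the right thing to record.
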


We have the following existence and regularity results.
\begin{proposition}[see e.g. \cite{evans98}]
Consider the problem
\label{parabolic}
\begin{equation}
\label{paraboliceq}
\begin{cases}
\partial_tv - D\Delta_y v = 0 & \mbox{ on } \Omega\times Y\\
-D\nabla_yv\cdot n_y=k(g - Rv) & \mbox{ on } \Omega\times\Gamma_R\\
-D\nabla_yv\cdot n_y= 0&\mbox{ on }\Omega\times\Gamma_N\\
v(t=0)=v_I&\mbox{ on }\Omega\times Y.
\end{cases}
\end{equation}
If $g\in\L^2(0,T;H^1(\Omega))$ and $v_I\in H^1(\Omega;H^m(Y))~(m\in\mathbb{N})$, then
the problem (\ref{paraboliceq}) has a unique weak solution $v\in \L^2(0,T;H^1(\Omega;H^{m+1}(Y)))$.
\end{proposition}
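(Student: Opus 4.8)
The plan is to prove existence and uniqueness for the linear parabolic problem \eqref{paraboliceq} by the Galerkin method combined with a bootstrap argument in the $y$-regularity. I treat $x\in\Omega$ as a parameter throughout, since the differential operator acts only in $y$ while $x$ enters only through the data $g$ and $v_I$; thus the problem is essentially a family of parabolic problems on $Y$ with a Robin-type boundary condition on $\Gamma_R$, and the estimates are to be integrated over $\Omega$ at the end. The natural weak formulation, for test functions $\varphi\in H^1(Y)$, reads
\begin{equation}
\nonumber
\int_\Omega\int_Y\partial_t v\,\varphi\,dy\,dx + D\int_\Omega\int_Y\nabla_y v\cdot\nabla_y\varphi\,dy\,dx + R\int_\Omega\int_{\Gamma_R}k\,v\,\varphi\,d\sigma_y\,dx = \int_\Omega\int_{\Gamma_R}k\,g\,\varphi\,d\sigma_y\,dx.
\end{equation}

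First I would construct Galerkin approximations in the $y$-variable using an $H^1(Y)$ basis, obtaining a finite system of ODEs in $t$ (with $x$ as parameter) that is uniquely solvable on $[0,T]$. Second I would derive the basic energy estimate: testing with $v$ itself and using the interpolation-trace inequality (Proposition \ref{sub:trace_theorem}) to absorb the boundary terms coming from $\Gamma_R$ into the gradient and $L^2$ norms — choosing $\rho$ small enough that the $\int_Y|\nabla_y v|^2$ contribution is dominated by the $D$-coercivity term. The boundary source term $\int k g v$ is controlled by Young's inequality together with the trace bound $\|v\|_{L^2(\Gamma_R)}\le c\|v\|_{H^1(Y)}$ and the boundedness of $k\in\mathcal{K}$. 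This yields, after a Gronwall argument, a uniform bound on $v$ in $L^\infty(0,T;L^2(\Omega\times Y))\cap L^2(0,T;L^2(\Omega;H^1(Y)))$, giving weak compactness and hence a weak limit solving the problem; uniqueness follows by taking the difference of two solutions, which satisfies the equation with zero data, and running the same energy estimate.

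The remaining and genuinely delicate point is the higher $y$-regularity, namely that $v\in L^2(0,T;H^1(\Omega;H^{m+1}(Y)))$ when $v_I\in H^1(\Omega;H^m(Y))$. The idea is a bootstrap in the order of $y$-derivatives: I would test the equation with increasing tangential difference quotients (or, equivalently, differentiate the equation in $y$ where the geometry permits) and derive energy estimates for $\nabla_y^j v$, using the parabolic smoothing in time to trade one order of regularity of the data $v_I$ for one additional spatial derivative of the solution. The hypothesis $g\in L^2(0,T;H^1(\Omega))$ ensures the Robin datum is regular enough in $x$ to propagate the claimed $H^1(\Omega)$ regularity, while the $y$-smoothness is gained purely from the parabolic operator $\partial_t - D\Delta_y$. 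Each bootstrap step requires controlling the boundary terms generated by the Robin condition under differentiation, again via Proposition \ref{sub:trace_theorem} and the structure of $k$.

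I expect the main obstacle to be precisely this boundary regularity in the bootstrap: differentiating the Robin condition $-D\nabla_y v\cdot n_y = k(g-Rv)$ produces terms involving derivatives of $k$ and the curvature of $\Gamma_R$, and since $k$ is only assumed to lie in $L^2(\Gamma_R)$ (via $\mathcal{K}$), one cannot naively differentiate the boundary condition. The clean way around this is to keep the weak (integrated-by-parts) form of all boundary contributions, estimating them through the trace inequality rather than through pointwise differentiation, so that only the $L^\infty$ bounds $\underline{k}\le k\le\bar{k}$ — not derivatives of $k$ — are ever used. Given the assumptions available in the excerpt, I would therefore present this as a standard consequence of linear parabolic theory (as the citation to \cite{evans98} suggests), with the $\Omega$-dependence carried along as a parameter and the two-scale norms assembled by integrating the single-cell estimates over $\Omega$.
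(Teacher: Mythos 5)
The paper does not prove this proposition at all---it is quoted as background with a pointer to \cite{evans98}---so your attempt can only be judged on its own merits, not against an internal argument. The first two stages of your plan (weak formulation with $x$ as a parameter, Galerkin in $y$, energy estimate via Proposition \ref{sub:trace_theorem} with small $\rho$, Gr\"{o}nwall, weak compactness, uniqueness from the same estimate applied to a difference of solutions) are correct and standard; they deliver existence and uniqueness of a weak solution with $v\in\L^2(0,T;\L^2(\Omega;H^1(Y)))$, i.e.\ essentially the $m=0$ conclusion, and the $H^1(\Omega)$-dependence does propagate as you say, because $k$ is independent of $x$, so $\nabla_x v$ formally solves the same problem with data $\nabla_x g$ and $\nabla_x v_I$.

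The genuine gap is the higher-regularity bootstrap, and the escape route you propose from the obstacle you yourself identify does not work. Keeping all boundary contributions in weak form and using only the bounds $\underline{k}\le k\le\bar{k}$ suffices precisely for the first energy estimate, because there the boundary term is just a bounded bilinear form on $\L^2(\Omega;H^1(Y))$; it cannot produce even one additional derivative up to $\Gamma_R$. To obtain $H^2(Y)$-regularity near the Robin boundary one needs the Robin datum $k(g-Rv)$ to lie in $H^{1/2}(\Gamma_R)$, which forces $k$ to be a pointwise multiplier on $H^{1/2}(\Gamma_R)$ (Lipschitz, say), not merely bounded measurable as in $\mathcal{K}$; each further step of the bootstrap requires correspondingly more smoothness of $k$ and of the boundary, whereas \ref{domains} provides only a Lipschitz $\partial Y$. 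Concretely, if you run tangential difference quotients, the term $\int_{\Gamma_R}k(g-Rv)\varphi\, d\sigma_y$ inevitably produces difference quotients of $k$, and no integration by parts transfers them onto the test function for free. Two further obstructions remain even for smooth $k$: the mixed Robin/Neumann condition caps regularity near the junction of $\overline{\Gamma_R}$ and $\overline{\Gamma_N}$ (a Zaremba-type phenomenon) unless $\Gamma_R$ and $\Gamma_N$ are separated components of $\partial Y$, and for $m\ge 1$ parabolic regularity up to $t=0$ requires compatibility conditions on $v_I$ that are nowhere imposed. So your argument, as written, establishes the proposition only for $m=0$; the statement for general $m$ needs hypotheses (smoothness of $k$ and of the geometry, separated boundary pieces, compatibility of the initial datum) that neither your proposal nor, for that matter, the paper's assumptions supply.
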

\begin{proposition}[see e.g \cite{cazenave06}]
Let $v(t,x,y)$ be in $\L^2(0,T;H^1(\Omega;H^{m+1}(Y)))~(m\in\mathbb{N})$ and consider the problem
\label{elliptic}
\begin{equation}
\label{ellipticeq}
\begin{cases}
-\Delta_x u=f(u,v) & \mbox{ on } \Omega\times Y\\
u(t,x)=0&\mbox{ on }\partial\Omega, t>0.
\end{cases}
\end{equation}
where the nonlinear function satisfies \ref{scale}-\ref{secondARG}.
Then the problem (\ref{ellipticeq}) has a unique weak solution $u\in\L^2(0,T;H^1(\Omega))$.
\end{proposition}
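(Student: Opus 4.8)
The plan is to treat the time variable $t$ as a parameter and to solve, for a.e.\ fixed $t\in(0,T)$, the semilinear Dirichlet problem for $u(t,\cdot)\in H^1_0(\Omega)$, the dependence on $t$ entering only through the data $v(t,\cdot,\cdot)$. Observe first that, although the equation is written on $\Omega\times Y$, the unknown $u=u(t,x)$ is $y$-independent and, by \ref{secondARG}, the forcing term $f(u,v)(\,\cdot\,,t)$ lies in $\L^2(\Omega)$ for a.e.\ $t$; hence it is legitimate to regard $-\Delta_x u=f(u,v)$ as an equation on $\Omega$ with an $\L^2(\Omega)$ right-hand side. The homogeneity assumption \ref{scale} plays no essential role in this particular well-posedness argument (it matters later for the inverse problem); the two active hypotheses are the Lipschitz bound \ref{structuralAssump} and the growth bound \ref{secondARG}.

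For existence I would invoke Schauder's fixed point theorem rather than a contraction, so as not to impose any smallness at this stage. Fix $t$ and define $T\colon\L^2(\Omega)\to\L^2(\Omega)$ by letting $T(w)=u$ be the unique weak solution in $H^1_0(\Omega)$ of the \emph{linear} problem $-\Delta_x u=f(w,v)$, $u|_{\partial\Omega}=0$. This $u$ exists and is unique by Lax--Milgram, the Dirichlet form being coercive on $H^1_0(\Omega)$ by Poincaré's inequality (Proposition \ref{poincare}) and $f(w,v)\in\L^2(\Omega)\hookrightarrow H^{-1}(\Omega)$ by \ref{secondARG}. The crucial point is that \ref{secondARG} bounds $\|f(w,v)\|_{\L^2(\Omega)}$ by $\sqrt{C}\,\|v\|_{\L^2(\Omega;H^1(Y))}$ \emph{independently of} $w$; the Lax--Milgram estimate then gives the uniform bound $\|T(w)\|_{H^1_0(\Omega)}\le C'\|v\|_{\L^2(\Omega;H^1(Y))}$, so that $T$ maps all of $\L^2(\Omega)$ into a fixed bounded subset $\mathcal B$ of $H^1_0(\Omega)$. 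By Rellich--Kondrachov $\mathcal B$ is relatively compact in $\L^2(\Omega)$, so $T$ is compact; its continuity follows from \ref{structuralAssump}, which makes $w\mapsto f(w,v)$ Lipschitz on $\L^2(\Omega)$ and hence, via continuous dependence for the linear problem, renders $w\mapsto T(w)$ continuous. Applying Schauder's theorem on the closed convex hull of $\mathcal B$ produces a fixed point $u=T(u)$, i.e.\ a weak solution of \eqref{ellipticeq}.

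For uniqueness I would subtract the weak formulations of two solutions $u_1,u_2$ and test with $u_1-u_2$, obtaining
\[
\|\nabla_x(u_1-u_2)\|^2_{\L^2(\Omega)}=\int_\Omega\bigl(f(u_1,v)-f(u_2,v)\bigr)(u_1-u_2)\,dx.
\]
Cauchy--Schwarz together with \ref{structuralAssump} and Poincaré's inequality (Proposition \ref{poincare}) bounds the right-hand side by a factor times $\|\nabla_x(u_1-u_2)\|^2_{\L^2(\Omega)}$, and the smallness condition \ref{structuralAssump2} keeps that factor strictly below one, forcing $u_1=u_2$. Finally, to pass from the pointwise-in-$t$ solutions to an element of $\L^2(0,T;H^1(\Omega))$, I would use the uniform estimate $\|u(t)\|_{H^1_0(\Omega)}\le C'\|v(t)\|_{\L^2(\Omega;H^1(Y))}$ and the embedding $H^{m+1}(Y)\hookrightarrow H^1(Y)$ to integrate in time, yielding $\|u\|_{\L^2(0,T;H^1(\Omega))}\le C'\|v\|_{\L^2(0,T;H^1(\Omega;H^{m+1}(Y)))}<\infty$; measurability of $t\mapsto u(t)$ then follows from uniqueness combined with the continuous dependence of the solution on its data (a closed-graph / measurable-selection argument).

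The step I expect to be most delicate is the existence argument: checking that $T$ genuinely leaves a fixed compact convex set invariant and is continuous there, which hinges precisely on the $w$-uniform $\L^2$-bound furnished by \ref{secondARG} (lacking it, one would be pushed back onto a contraction requiring \ref{structuralAssump2} already for existence). A secondary technical subtlety is the continuity of $f$ in its second argument, which is needed to make the solution map continuous and measurable in $t$; since only \ref{structuralAssump} and \ref{secondARG} are at hand, I would handle this most safely by a weak--strong limiting argument in the fixed-point passage, closed off by the uniqueness just established.
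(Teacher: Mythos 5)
The paper does not actually prove this proposition: it is stated as an auxiliary result quoted from the literature (\cite{cazenave06}), so your attempt can only be judged against the statement and its hypotheses. Your existence half is sound and is the natural argument: \ref{secondARG} bounds $\|f(w,v)\|_{\L^2(\Omega)}$ by $\sqrt{C}\,\|v\|_{\L^2(\Omega;H^1(Y))}$ independently of $w$, Lax--Milgram with Poincar\'e gives a $w$-uniform $H^1_0(\Omega)$ bound, Rellich--Kondrachov gives compactness, \ref{structuralAssump} gives continuity of the solution map, Schauder applies, and the final time integration via $H^{m+1}(Y)\hookrightarrow H^1(Y)$ is correct. The measurability-in-$t$ issue you flag is real but secondary: continuity of $f$ in its second argument is nowhere assumed (the paper itself silently uses such continuity when proving $T_2^\lambda$ continuous in Theorem \ref{existenceThm}), so your weak--strong limiting fallback is the right instinct.

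The genuine gap is in uniqueness. The proposition assumes only \ref{scale}--\ref{secondARG}, but your uniqueness proof invokes \ref{structuralAssump2}, which is not among its hypotheses --- and the paper's Remark explicitly reserves \ref{structuralAssump2} for the uniqueness of the coupled system (Theorem \ref{uniquenessThm}). Under the stated hypotheses, the mechanism that must carry uniqueness is precisely the homogeneity \ref{scale}, which you dismissed as playing ``no essential role'': since $f(\lambda u,v)=\lambda^\alpha f(u,v)$ with $0<\alpha<1$, one has $f(u,v)=u^\alpha f(1,v)$ for $u>0$, so \eqref{ellipticeq} is the sublinear problem $-\Delta_x u=u^\alpha g$ with $g=f(1,v(t,\cdot))$, and uniqueness (of the positive solution, under a sign condition on $g$ that the statement glosses over) follows from a Brezis--Oswald-type comparison; this is the setting of the cited reference. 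Moreover, even granting \ref{structuralAssump2}, your contraction-style estimate does not close as written: \ref{structuralAssump} bounds the \emph{square} of the difference by $C^*\|u_1-u_2\|^2_{\L^2(\Omega)}$, so testing with $u_1-u_2$ and applying Poincar\'e in the paper's squared convention $\|u\|^2_{\L^2(\Omega)}\le c_P(\Omega)\|\nabla_x u\|^2_{\L^2(\Omega)}$ produces the factor $\sqrt{C^*}\,c_P(\Omega)$, and $C^*c_P(\Omega)<1$ does not imply $\sqrt{C^*}\,c_P(\Omega)<1$ unless $c_P(\Omega)\le 1$. You should therefore either add a (correctly normalized) smallness hypothesis and weaken the claim accordingly, or replace the Lipschitz argument by the sublinearity argument that \ref{scale} affords.
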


Finally, we state the following two classical compactness results, see e.g. \cite{zeidler86}.
\begin{theorem}[Aubin-Lions Theorem \cite{aubin1963}]
\label{sub:aubin_lions}
Let $B_0 \embed B \embedembed B_1$. Suppose that $B_0$ is compactly embedded in $B$ and that $B$ is continuously embedded in $B_1$. Let
\begin{equation}
\label{eq:aubin_lions}
W = \left\{u\in \L^2\left(0,T;B_0 \right) : \partial_t u \in \L^2\left(0,T; B_1 \right) \right\}.
\end{equation}
Then the embedding of $W$ into $\L^2\left(0,T;B \right)$ is compact.
\end{theorem}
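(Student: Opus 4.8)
The plan is to reduce the claimed compactness to the interplay of two ingredients: an Ehrling-type interpolation inequality, which encodes the compact embedding $B_0\embedembed B$, and a uniform control of time translations, which encodes the $\L^2$ bound on $\partial_t u$. Concretely, let $(u_n)\subset W$ be a bounded sequence, say $\|u_n\|_{\L^2(0,T;B_0)}\le M$ and $\|\partial_t u_n\|_{\L^2(0,T;B_1)}\le M$; I want to extract a subsequence converging strongly in $\L^2(0,T;B)$.

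First I would establish the interpolation (Ehrling) lemma: for every $\eta>0$ there is $C_\eta>0$ with $\|w\|_B\le\eta\|w\|_{B_0}+C_\eta\|w\|_{B_1}$ for all $w\in B_0$. This is proved by contradiction: negating it produces a sequence $w_n$ with $\|w_n\|_B=1$, $\|w_n\|_{B_0}$ bounded and $\|w_n\|_{B_1}\to0$; compactness of $B_0\embedembed B$ yields a subsequence converging in $B$ to some $w$ with $\|w\|_B=1$, while continuity of $B\embed B_1$ forces $w=0$, a contradiction. Integrating this inequality in time (applied to $u_n(t)-u_m(t)$) then gives
$$\|u_n-u_m\|_{\L^2(0,T;B)}\le 2M\eta+C_\eta\|u_n-u_m\|_{\L^2(0,T;B_1)},$$
so that, via a diagonal argument over $\eta=1/k$, the proof reduces to showing that $(u_n)$ is relatively compact in the weaker space $\L^2(0,T;B_1)$.

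For the latter I would verify the hypotheses of the vector-valued Fréchet–Kolmogorov (Simon) criterion. Boundedness in $\L^2(0,T;B_1)$ follows from $B_0\embed B_1$. The essential estimate is the equicontinuity of time translations, obtained from $u_n(t+h)-u_n(t)=\int_t^{t+h}\partial_tu_n(s)\,ds$ together with Cauchy–Schwarz and Fubini:
$$\|u_n(\cdot+h)-u_n\|_{\L^2(0,T-h;B_1)}\le h\,\|\partial_tu_n\|_{\L^2(0,T;B_1)}\le hM\xrightarrow[h\to0]{}0$$
uniformly in $n$; and the relative compactness of the relevant (averaged) values in $B_1$ is inherited from the compact embedding $B_0\embedembed B_1$, a composition of the compact $B_0\embedembed B$ with the continuous $B\embed B_1$.

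The main obstacle is precisely this spatial-compactness ingredient of the Fréchet–Kolmogorov step: pointwise-in-time values $u_n(t)$ need not be controlled in $B_0$ for every $t$, so one cannot naively test compactness slice by slice. The clean remedy is to work with time-averages $u_n^\e(t)=\tfrac1\e\int_t^{t+\e}u_n(s)\,ds$, which are equibounded in $B_0$ and hence relatively compact in $B_1$, and which approximate $u_n$ in $\L^2(0,T;B_1)$ uniformly in $n$ by the translation estimate above; combining the two furnishes a totally bounded family in $\L^2(0,T;B_1)$. In practice I would package exactly this step by invoking Simon's compactness theorem, thereby avoiding a from-scratch reproof of the vector-valued Fréchet–Kolmogorov criterion.
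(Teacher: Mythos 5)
The paper itself offers no proof of this statement: it is quoted as a classical compactness result with a citation (Aubin; see also Zeidler/Simon), so there is no in-paper argument to compare against. Your proposal is a correct rendition of the standard proof from the literature, namely Ehrling's interpolation lemma combined with Simon's (vector-valued Fr\'echet--Kolmogorov) compactness criterion. The contradiction proof of $\|w\|_B\le\eta\|w\|_{B_0}+C_\eta\|w\|_{B_1}$ is right; the reduction of compactness in $\L^2(0,T;B)$ to compactness in $\L^2(0,T;B_1)$ is right; the translation estimate $\|u_n(\cdot+h)-u_n\|_{\L^2(0,T-h;B_1)}\le h\,\|\partial_t u_n\|_{\L^2(0,T;B_1)}$ does follow exactly as you say from Cauchy--Schwarz and Fubini; and, crucially, you identify the one genuinely delicate point --- pointwise values $u_n(t)$ need not be bounded in $B_0$, so spatial compactness cannot be tested slice by slice --- and fix it correctly by passing to time averages $\frac{1}{\e}\int_t^{t+\e}u_n(s)\,ds$ (equivalently, to the integrals $\int_{t_1}^{t_2}u_n(s)\,ds$ appearing in Simon's criterion), which are bounded in $B_0$ and hence precompact in $B_1$.

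Three minor remarks. First, the diagonal argument over $\eta=1/k$ is unnecessary: once a single subsequence is Cauchy in $\L^2(0,T;B_1)$, the integrated Ehrling inequality shows that this same subsequence is Cauchy in $\L^2(0,T;B)$, since $\eta>0$ is arbitrary in $\limsup_{n,m}\|u_n-u_m\|_{\L^2(0,T;B)}\le 2M\eta$. Second, you silently use that an $\L^2(0,T;B_1)$ bound on the distributional derivative makes $u_n$ (after modification on a null set) an absolutely continuous $B_1$-valued function, which is what licenses $u_n(t+h)-u_n(t)=\int_t^{t+h}\partial_t u_n(s)\,ds$; this deserves one explicit line. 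Third, note that the displayed hypothesis ``$B_0\embed B\embedembed B_1$'' in the statement has the embedding symbols transposed relative to the verbal hypothesis; your reading (compact $B_0\embedembed B$, continuous $B\embed B_1$) is the correct one, and it is the only reading under which the theorem holds.
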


\begin{theorem}[Schauder's Fixed Point Theorem]
\label{sub:schauders_fixed_point_theorem}
Let $B$ be a nonempty, closed, convex, bounded set and $T:B\to B$ a compact operator. Then there exists at least one $r \in B$ such that $T(r) = r$. 
\end{theorem}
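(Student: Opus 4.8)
The plan is to reduce this infinite-dimensional statement to the finite-dimensional Brouwer fixed point theorem, using the compactness of $T$ to control the error introduced by a finite-dimensional approximation. Throughout, $B$ is viewed as a subset of the ambient Banach space, and "compact operator" is taken to mean that $T$ is continuous and maps bounded sets to relatively compact ones. First I would exploit compactness directly: since $B$ is bounded, $\overline{T(B)}$ is compact, hence totally bounded. Fixing $\varepsilon>0$, I cover $T(B)$ by finitely many balls of radius $\varepsilon$ centred at points $y_1,\dots,y_{N_\varepsilon}\in T(B)$.

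Next I would construct the Schauder projection onto the finite-dimensional hull of these centres. Define the continuous weights $\mu_i(y)=\max\{0,\varepsilon-\|y-y_i\|\}$ and set
\[
P_\varepsilon(y)=\frac{\sum_i\mu_i(y)\,y_i}{\sum_i\mu_i(y)},
\]
the denominator being strictly positive on $T(B)$ because each $y$ lies within $\varepsilon$ of some centre. By convexity of $B$, the map $P_\varepsilon$ sends $T(B)$ into the convex hull $C_\varepsilon:=\mathrm{conv}\{y_1,\dots,y_{N_\varepsilon}\}\subset B$, and it satisfies the crucial uniform estimate $\|P_\varepsilon(y)-y\|\le\varepsilon$ for all $y\in T(B)$, since only indices with $\|y-y_i\|<\varepsilon$ carry nonzero weight.

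Then I would apply Brouwer and pass to the limit. The composition $T_\varepsilon:=P_\varepsilon\circ T$ restricts to a continuous self-map of the compact convex finite-dimensional set $C_\varepsilon$, so Brouwer's theorem furnishes a point $x_\varepsilon\in C_\varepsilon\subset B$ with $T_\varepsilon(x_\varepsilon)=x_\varepsilon$. Taking $\varepsilon=1/n$, the points $Tx_n$ all lie in the compact set $\overline{T(B)}$, so along a subsequence $Tx_{n_k}\to z\in B$ (using that $B$ is closed). The approximation estimate gives $\|x_{n_k}-Tx_{n_k}\|=\|P_{\varepsilon_k}(Tx_{n_k})-Tx_{n_k}\|\le\varepsilon_k\to0$, whence $x_{n_k}\to z$ as well; continuity of $T$ then yields $Tx_{n_k}\to Tz$, and comparing limits forces $Tz=z$.

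The main obstacle is the construction of the projection $P_\varepsilon$ with the simultaneous properties that its range stays inside $B$ (which needs convexity) and that it displaces points of $T(B)$ by at most $\varepsilon$ (which needs the covering radius to match the weight cutoff). Once this approximation device is in place, the remainder is a routine assembly of total boundedness, Brouwer's theorem, and a compactness-based limit argument. Since the statement is a standard result, in the paper itself it is used as a black box via the cited reference rather than reproved.
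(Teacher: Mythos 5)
Your proof is correct and complete: it is the classical argument for Schauder's theorem, reducing to Brouwer's fixed point theorem via the Schauder projection onto the convex hull of a finite $\varepsilon$-net, then passing to the limit using compactness of $\overline{T(B)}$ and continuity of $T$. The paper itself gives no proof of this statement — it is invoked as a black box with a citation to Zeidler — and your argument is precisely the standard one found in such references, so there is nothing to reconcile. One hygienic remark: when you extract the finite $\varepsilon$-net, take the covering by \emph{open} balls, so that each $y\in T(B)$ satisfies $\|y-y_i\|<\varepsilon$ strictly for some $i$; this is what makes the denominator $\sum_i\mu_i(y)$ strictly positive and the displacement bound $\|P_\varepsilon(y)-y\|\le\varepsilon$ go through exactly as you state.
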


\section{Existence and uniquenes of the solution}
\subsection{Existence of weak solution}
The main result of this subsection is the following theorem.
\begin{theorem}
\label{existenceThm}
Assume that \ref{parameters}-\ref{secondARG} hold. Then the problem (\ref{generalProblem}) has at least a weak solution solution $(\pi,\rho)\in\L^2(0,T;H_0^1(\Omega))\times\L^2(0,T;\L^2(\Omega;H^1(Y)))$. 
\end{theorem}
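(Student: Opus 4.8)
The plan is to recast \eqref{generalProblem} as a fixed-point problem for a suitable solution operator and to apply Schauder's Theorem~\ref{sub:schauders_fixed_point_theorem}. The natural iteration exploits the one-directional solvability of each sub-problem. Given a microscopic density $\rho$ in the ball $B_M:=\{\rho\in\L^2(0,T;\L^2(\Omega;H^1(Y))):\|\rho\|_{\L^2(0,T;\L^2(\Omega;H^1(Y)))}\le M\}$, I would first solve the (nonlinear) elliptic macro-problem \eqref{ellipticeq} to obtain $\pi=\pi[\rho]\in\L^2(0,T;H^1_0(\Omega))$ via Proposition~\ref{elliptic}, and then feed the Robin datum $g=\pi+p_F$ into the linear parabolic micro-problem \eqref{paraboliceq} to obtain $\rho^\ast\in\L^2(0,T;H^1(\Omega;H^{m+1}(Y)))$ via Proposition~\ref{parabolic}. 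Setting $T(\rho):=\rho^\ast$ turns any fixed point $\rho=T(\rho)$, together with $\pi=\pi[\rho]$, into a weak solution of \eqref{generalProblem}. It then remains to show that $T$ maps some closed, bounded, convex set into itself and is compact and continuous there.

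Second, I would derive the two-scale energy estimates that control $T$. For the elliptic step, testing \eqref{ellipticeq} with $\pi$ and invoking \ref{secondARG} together with Poincaré's inequality (Proposition~\ref{poincare}) yields, at a.e.\ time $t$, a bound of the shape $\|\pi[\rho]\|_{H^1(\Omega)}\le c\,\|\rho\|_{\L^2(\Omega;H^1(Y))}$ whose right-hand side does not see $\pi$; this independence is the crucial consequence of \ref{secondARG}. For the parabolic step, testing \eqref{paraboliceq} with $\rho^\ast$ and integrating by parts in $y$ produces the dissipative terms $D\|\nabla_y\rho^\ast\|^2$ and $R\int_{\Gamma_R}k(\rho^\ast)^2$, while the Robin forcing $\int_{\Gamma_R}kg\,\rho^\ast$ is split by Young's inequality: its $\rho^\ast$-part is absorbed into $R\underline{k}\int_{\Gamma_R}(\rho^\ast)^2$ and its $g$-part is estimated through the interpolation--trace inequality (Proposition~\ref{sub:trace_theorem}) and the elliptic bound above. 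Integrating in time and using $\rho_I\in H^1(\Omega\times Y)$ then produces an estimate of the form $\|T(\rho)\|^2_{\L^2(0,T;\L^2(\Omega;H^1(Y)))}\le a\,M^2+b$ with $a,b$ depending only on the data and coefficients.

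Third, compactness of $T$ would come from the parabolic regularity gain. Applying Proposition~\ref{parabolic} with $m=1$ gives $T(\rho)\in\L^2(0,T;H^1(\Omega;H^2(Y)))$, while the equation yields $\partial_t\rho^\ast=D\Delta_y\rho^\ast\in\L^2(0,T;\L^2(\Omega\times Y))$. Since $H^1(\Omega;H^2(Y))\embedembed\L^2(\Omega;H^1(Y))\embed\L^2(\Omega\times Y)$ — the first embedding being compact by Rellich in each of the $x$ and $y$ variables — the Aubin--Lions Theorem~\ref{sub:aubin_lions} shows that $T(B_M)$ is relatively compact in $\L^2(0,T;\L^2(\Omega;H^1(Y)))$. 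Continuity of $T$ follows by combining the Lipschitz-type bound \ref{structuralAssump} for the elliptic solve (which controls $\pi[\rho_1]-\pi[\rho_2]$ in terms of $\rho_1-\rho_2$) with the continuous dependence of the linear parabolic solution on its datum $g$. Schauder's theorem then yields the desired fixed point.

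The main obstacle is the self-mapping step, that is, making the estimate $\|T(\rho)\|^2\le a\,M^2+b$ close into an invariant set, which needs the effective coefficient $a$ to be below $1$ so that a ball $B_M$ of large radius is preserved. The difficulty is the genuinely bidirectional micro--macro coupling: $\pi$ re-enters the parabolic problem only through the Robin trace on $\Gamma_R$, so the feedback must be routed through that boundary term, and the $H^1(Y)$-norm appearing on the right of \ref{secondARG} reintroduces $\nabla_y\rho$, which must be absorbed into the parabolic dissipation (or handled by a Grönwall argument in time, possibly after first establishing existence on a short interval and continuing) rather than naively bounded. Establishing continuity of the nonlinear elliptic solve, where \ref{structuralAssump} is essential, is the other point requiring care.
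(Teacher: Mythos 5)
Your overall architecture is the same as the paper's: decouple \eqref{generalProblem} into the elliptic and parabolic sub-problems, compose the two solution operators, obtain compactness from the parabolic step (Aubin--Lions, Theorem \ref{sub:aubin_lions}), and close with Schauder's theorem; the only structural difference (you iterate on $\rho$, the paper on $\pi$) is inessential. The genuine gap sits exactly where you yourself flag ``the main obstacle'': the self-mapping step, which you leave unresolved. Your energy estimate has the form $\|T(\rho)\|^2\le a\,M^2+b$, where $a$ is assembled from the constant in \ref{secondARG}, the interpolation--trace constant of Proposition \ref{sub:trace_theorem}, Poincar\'e's constant, $\bar k$, $|\Gamma_R|$ and $1/D$; none of these carries any smallness, so there is no reason to have $a<1$ and no invariant ball $B_M$. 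Your proposed remedies do not repair this: the parabolic dissipation can absorb the $\e\|\nabla_y\rho^\ast\|^2$ generated by the trace inequality, but not the coefficient of $M^2$, which enters through $\|\pi[\rho]\|_{H^1(\Omega)}\le c\|\rho\|_{\L^2(\Omega;H^1(Y))}$ and survives untouched; and shrinking $T$ (short-time existence plus continuation) does not help either, because the offending term arises from $\int_0^T\|\pi(t)\|^2\,dt\le c^2\|\rho\|^2_{\L^2(0,T;\L^2(\Omega;H^1(Y)))}\le c^2M^2$, a bound that does not improve as $T\to0$ since the time integral is already absorbed into $M$.

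The missing idea --- and the reason assumption \ref{scale}, which your proposal never invokes, appears among the hypotheses --- is the paper's scaling trick. The paper replaces the elliptic sub-problem by its $\lambda$-scaled version \eqref{eq:p3lambda}, $-\Delta_x\bar\pi=\lambda^\beta f(\bar\pi,\xi)$, and runs Schauder for $\mathcal{A}^\lambda=T_2^\lambda\circ T_1$. Every estimate for $T_2^\lambda$ then carries the factor $\lambda^\beta$, so the analogue of your coefficient $a$ becomes $C'\lambda^\beta$ and is made small by choosing $\lambda$ (depending on $K$); the solution of \eqref{generalProblem} is recovered from the fixed point by undoing the scaling, $\pi=\bar\pi/\lambda$, which is legitimate precisely because $f(\lambda u,\mu v)=\lambda^\alpha\mu^\beta f(u,v)$ with $\alpha+\beta=1$. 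The same factor also rescues the continuity step that you delegate to \ref{structuralAssump}: subtracting two elliptic solutions and testing with their difference produces the term $\sqrt{C^*}\,\|\pi_1-\pi_2\|^2_{\L^2(\Omega)}$, which must be absorbed into $\|\nabla_x(\pi_1-\pi_2)\|^2_{\L^2(\Omega)}$ via Poincar\'e; without the prefactor $\lambda^\beta$ this requires a smallness relation between $C^*$ and $c_P(\Omega)$, i.e.\ essentially \ref{structuralAssump2}, which Theorem \ref{existenceThm} does not assume (it is reserved for uniqueness). So both your invariance step and your continuity step need the scaling mechanism you left out, and as written the Schauder argument does not close.
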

\begin{proof}
We shall decouple the problem. The first sub-problem is as follows:
given $\pi\in\L^2(0,T;H^1_0(\Omega))$ and $\rho_I\in H^1(\Omega,H^1(Y))$, we let $\xi$ be the weak solution to
\begin{equation}
\label{eq:p2}
\begin{cases}
\partial_t\xi - D\Delta_y \xi = 0 & \mbox{ on } \Omega\times Y\\
-D\nabla_y\xi\cdot n_y=k(\pi+p_F - R\xi) & \mbox{ on } \Omega\times\Gamma_R\\
-D\nabla_y\xi\cdot n_y= 0&\mbox{ on }\Omega\times\Gamma_N\\
\xi(t=0)=\rho_I&\mbox{ on }\Omega\times Y.
\end{cases}
\end{equation}
The weak formulation of (\ref{eq:p2}) is: find $\xi$ such that for a.e. $t\in[0,T]$ and every $\psi\in\L^2(\Omega, H^1(Y))$ there holds
\begin{equation}
\label{eq:weak_p2}
\int_\Omega\int_Y \partial_t\xi\psi dydx+\int_\Omega\int_Y D\nabla_y \xi\nabla_y \psi dydx=\int_\Omega\int_{\Gamma_R}k (\pi+p_F-R\xi)\psi d\sigma_ydx,  
\end{equation}
and $\xi(t=0)=\rho_I$.
Existence and regularity of $\xi$ is provided by Proposition \ref{parabolic} (recall that \ref{initial} states that $\rho_I\in H^1(\Omega\times Y)$).

The second sub-problem is: given data $\xi$, consider the problem
\begin{equation}
\label{eq:p3}
\begin{cases}
-\Delta_x \pi=f(\pi,\xi) &\mbox{ on }\Omega\\
\pi=0 &\mbox{ on }\partial\Omega.
\end{cases} 
\end{equation}
Let $\lambda>0$ be a free parameter. By the scaling properties of $f$ and uniqueness of weak solution, we have that if $\pi$ is the weak solution of (\ref{eq:p3}) with data $\xi$, then $\bar{\pi}=\lambda\pi$ is the weak solution to (\ref{eq:p3}) with data $\lambda\xi$. Hence, if $\bar{\pi}$ is the weak solution to
\begin{equation}
\label{eq:p3lambda}
\begin{cases}
-\Delta_x\bar{\pi}=\lambda^\beta f(\bar{\pi},\xi) &\mbox{ on }\Omega\\
\bar{\pi}=0 &\mbox{ on }\partial\Omega,
\end{cases} 
\end{equation}
then $\bar{\pi}=\lambda\pi$, again by the scaling properties of $f$.
The weak form of (\ref{eq:p3lambda}) is as follows: find $\bar{\pi}$ such that for a.e. $t\in[0,T]$ and all $\varphi\in H_0^1(\Omega)$, there holds
\begin{equation}
\nonumber
\label{eq:weak_pressure}
\int_\Omega \nabla_x\pi\cdot\nabla_x\varphi dx=\lambda^\beta\int_\Omega f(\pi,\xi)\varphi dx. 
\end{equation}
Existence and regularity of $\bar{\pi}$ is guaranteed by Proposition \ref{elliptic}.

We shall now use a fixed point argument \`{a} la Schauder (see Theorem \ref{sub:schauders_fixed_point_theorem}) to show that there exists a $\lambda>0$ for which the functions of the pair $(\bar{\pi},\xi)$  are weak solutions to the sub-problems (\ref{eq:p2}) and (\ref{eq:p3lambda}). Then we recover $(\pi,\rho)$,  a weak solution to (\ref{generalProblem}), by taking $\pi=\bar{\pi}/\lambda$ and $\rho=\xi$.

Define the operators
$$
T_1: \L^2(0,T;\L^2(\Omega))\rightarrow \L^2(0,T; \L^2(\Omega;\L^2(Y)))
$$
by $T_1(\pi)=\xi$ (the weak solution of (\ref{eq:p2})) and
$$
T^\lambda_2:\L^2(0,T; \L^2(\Omega;H^1(Y)))\rightarrow \L^2(0,T; H^1(\Omega))
$$
by $T^\lambda_2(\xi)=\bar{\pi}$ (the weak solution of (\ref{eq:p3lambda})). Finally, consider the operator $\mathcal{A}^\lambda$ on the space $\L^2(0,T;\L^2(\Omega))$ into itself defined by
\begin{equation}
\label{eq:operatorA}
\mathcal{A}^\lambda(\pi)=T^\lambda_2(T_1(\pi))=\bar{\pi}.
\end{equation}
To obtain existence of solution, we shall prove that the operator $\mathcal{A}^\lambda$ has a fixed point. This $\pi$ will then give $\xi$.
The idea of the proof is to first use the Schauder Fixed Point Theorem (Theorem \ref{sub:schauders_fixed_point_theorem} above).

We shall prove that there exist a $\lambda>0$ and a set $B$ such that
\begin{enumerate}
\item\label{prop1} $\A^\lambda$ is a compact operator;
\item\label{prop2} $B$ is convex, closed, bounded and satisfies 
$\A^\lambda(B)\subset B$.
\end{enumerate}
To obtain compactness of $A^\lambda=T^\lambda_2\circ T_1$, it is sufficient to demonstrate that $T_1$ is compact and that $T^\lambda_2$ is continuous. Recall that we have
$$
T_1:\L^2(0,T;H^1(\Omega))\rightarrow \L^2(0,T; \L^2(\Omega;\L^2(Y))).
$$
However, since we assume that $\xi_I\in H^1(\Omega, H^1(Y))$ we get
that $T_1(\pi)=\xi\in\L^2(0,T;H^1(\Omega\times Y))$ and $\partial_t\xi\in\L^2(0,T;L^2(\Omega\times Y))$.
Whence,
$$
T_1( \L^2(0,T;H^1(\Omega)))\subset V,
$$
where
$$
V=\{u:u\in\L^2(0,T;H^1(\Omega\times Y)),\, \partial_tu\in\L^2(0,T;L^2(\Omega\times Y))\}
$$
By Theorem \ref{sub:aubin_lions},
$$
V\embedembed \L^2(0,T;\L^2(\Omega;\L^2(Y))).
$$
Thus, for any bounded set $M\subset\L^2(0,T; \L^2(\Omega;\L^2(Y)))\times \L^2(0,T;\L^2(\Omega))$, there holds $T_1(M)\subset V$ and since $V$ is compactly contained in $\L^2(0,T;\L^2(\Omega;\L^2(Y))$ we have that $T_1(M)$ is precompact in $\L^2(0,T;\L^2(\Omega;\L^2(Y)))$. Hence, $T_1$ is compact.

We continue to prove that $T^\lambda_2$ is continuous.
Assume we have two solutions $\bar{\pi}_1=T_2^\lambda(\xi_1)$ and $\bar{\pi}_2=T^\lambda_2(\xi_2)$.
Substituting these both in \eqref{eq:weak_pressure} and subtracting, we obtain
\begin{equation}
%\label{eq:d}
\nonumber
\int_\Omega\nabla_x(\bar{\pi}_1-\bar{\pi}_2)\cdot\nabla_x\varphi dx =\lambda^\beta\int_\Omega[f(\bar{\pi}_1,\xi_1) - f(\bar{\pi}_2,\xi_2)]\varphi dx,
\end{equation}
and for $\varphi=\bar{\pi}_1-\bar{\pi}_2$, we get
\begin{align*}
\|\nabla_x(\bar{\pi}_1-\bar{\pi}_2)\|_{\L^2(\Omega)}^2
&=\lambda^\beta\int_\Omega[f(\bar{\pi}_1,\xi_1)-f(\bar{\pi}_2,\xi_2)][\bar{\pi}_1-\bar{\pi}_2]dx\\
&=\lambda^\beta\int_\Omega[f(\bar{\pi}_1,\xi_1)-f(\bar{\pi}_2,\xi_1)][\bar{\pi}_1-\bar{\pi}_2]dx\\
&+\lambda^\beta\int_\Omega[f(\bar{\pi}_2,\xi_1) - f(\bar{\pi}_2,\xi_2)][\bar{\pi}_1-\bar{\pi}_2]dx.
\end{align*}
Using \ref{structuralAssump} and \ref{secondARG}, we obtain that
\begin{equation}
\nonumber
\|\nabla_x(\bar{\pi}_1-\bar{\pi}_2)\|_{\L^2(\Omega)}^2\le C\lambda^\beta\|\bar{\pi}_1-\bar{\pi}_2\|_{\L^2(\Omega)}\|\xi_2-\xi_1\|_{\L^2(\Omega;H^1(Y))}.
\end{equation}
By the Poincar\'e's inequality, we obtain
\begin{equation}
\nonumber
%\label{eq:cont}
\|\nabla_x(\bar{\pi}_1-\bar{\pi}_2)\|_{\L^2(\Omega)}\leq C\lambda^\beta\|\xi_1-\xi_2\|_{\L^2(\Omega;H^1(Y))}   
\end{equation}
and we conclude the mapping $T^\lambda_2$ is continuous.

Let $K>0$ be a fixed number that we specify later and let $B_K$ be the collection of functions $u\in\L^2(0,T;H^1(\Omega))$ such that
\begin{equation}
\nonumber
\max\{\|u\|_{\L^2(0,T;L^2(\Omega))},\|\nabla_xu\|_{\L^2(0,T;L^2(\Omega))} \}\le K.
\end{equation}
For each $K>0$, the set
$$
B_K\subset \L^2(0,T;H^1(\Omega))
$$
is a convex, closed and bounded. We show that we may select $K>0$ and $\lambda>0$ such that
\begin{equation}
\label{eq:inclusion}
\A^\lambda(B_K)\subset B_K.
\end{equation}
Note that $T_1(B_K)$ is a bounded subset of $\L^2(0,T;\L^2(\Omega;\L^2(Y)))$, with a bound depending only on $K$. In other words,
\begin{equation}
\label{eq:xi_bound}
\|\xi\|_{\L^2(0,T;\L^2(\Omega;\L^2(Y))))}\le CK.
\end{equation}
Indeed, this follows from the fact that $T_1$ is a compact operator.

We proceed by observing that we may choose $\lambda>0$ such that if $u\in B_K$ is arbitrary and $v=T_2^\lambda(T_1(u))$, then
\begin{equation}
\label{eq:maxEst2}
\max\left\{\|v\|_{\L^2(0,T;\L^2(\Omega))},\|v\|_{\L^2(0,T;H^1(\Omega))}\right\}\le K.
\end{equation}
Let $\xi=T_1(u)$ so that $v=T_2^\lambda(\xi)$. Testing the weak formulation of $(\ref{eq:p3lambda})$ with $\varphi=u$ and using Cauchy-Schwarz' inequality and Poincar\'e's inequality, we get
$$
\|\nabla_xu\|_{\L^2(\Omega)}^2\le C\lambda^\beta\|u\|_{\L^2(\Omega)}\|\xi\|_{\L^2(\Omega;\L^2(Y))}\le CK\lambda^\beta\|\xi\|_{\L^2(\Omega;\L^2(Y))}.
$$
Integrating over $[0,T]$ and using (\ref{eq:xi_bound}),
we obtain after using Poincar\'e's inequality
$$
\|u\|^2_{\L^2(0,T;H^1(\Omega))}\le C'K^2\lambda^\beta.
$$
By taking $\lambda$ small enough (depending on $K$), we obtain (\ref{eq:maxEst2}) whence (\ref{eq:inclusion}) follows.
\end{proof}

\begin{remark}
Instead of using scaling arguments and Schauder's fixed point theorem, we could have used alternatively the Schaefer/Leray-Schauder fixed point theorem.
\end{remark}

\subsection{Uniqueness of weak solutions}

We proceed to prove the following uniqueness theorem.
\begin{theorem}
\label{uniquenessThm}
Assume that in addition to the assumptions of Theorem \ref{existenceThm} the condition \ref{structuralAssump2} also holds. Then the weak solution to (\ref{generalProblem}) is unique.
\end{theorem}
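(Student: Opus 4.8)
The plan is to assume there are two weak solutions $(\pi_1,\rho_1)$ and $(\pi_2,\rho_2)$ of \eqref{generalProblem} with the same data, set $\tilde\pi=\pi_1-\pi_2$ and $\tilde\rho=\rho_1-\rho_2$, and subtract the two weak formulations. Because both densities start from $\rho_I$ one has $\tilde\rho(0)=0$, the constant $p_F$ drops out of the Robin condition, so $\tilde\rho$ solves the parabolic sub-problem \eqref{eq:weak_p2} with Robin datum $k\tilde\pi$ while $\tilde\pi$ solves the elliptic sub-problem with source $f(\pi_1,\rho_1)-f(\pi_2,\rho_2)$. The objective is to close a Gr\"{o}nwall inequality for $t\mapsto\|\tilde\rho(t)\|^2_{\L^2(\Omega\times Y)}$; since this quantity vanishes at $t=0$, it must then vanish identically, and $\tilde\pi\equiv0$ will follow from the elliptic estimate.

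First I would treat the elliptic equation. Testing the difference of the two weak elliptic problems with $\tilde\pi$ gives $\|\nabla_x\tilde\pi\|^2_{\L^2(\Omega)}$ on the left and $\int_\Omega[f(\pi_1,\rho_1)-f(\pi_2,\rho_2)]\tilde\pi\,dx$ on the right, which I split as $[f(\pi_1,\rho_1)-f(\pi_2,\rho_1)]+[f(\pi_2,\rho_1)-f(\pi_2,\rho_2)]$. The first bracket is estimated by \ref{structuralAssump} and Cauchy-Schwarz, producing a multiple of $\|\tilde\pi\|^2_{\L^2(\Omega)}$; the second bracket is estimated by \ref{secondARG}, producing a term of the form $\|\tilde\rho\|_{\L^2(\Omega;H^1(Y))}\|\tilde\pi\|_{\L^2(\Omega)}$. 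Invoking Poincar\'{e}'s inequality (Proposition \ref{poincare}) on the self-referential term and using precisely the smallness condition \ref{structuralAssump2}, that term can be moved to the left-hand side with a strictly positive remaining coefficient, after which Young's inequality yields
\begin{equation}
\nonumber
\|\tilde\pi\|^2_{\L^2(\Omega)}\le C_3\,\|\tilde\rho\|^2_{\L^2(\Omega;H^1(Y))},
\end{equation}
with $C_3$ finite and depending only on the fixed constants $C^*$, $C$ and $c_P(\Omega)$.

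Next I would test the difference of \eqref{eq:weak_p2} with $\psi=\tilde\rho$. The time term produces $\tfrac12\tfrac{d}{dt}\|\tilde\rho\|^2_{\L^2(\Omega\times Y)}$, the diffusion term produces $D\|\nabla_y\tilde\rho\|^2_{\L^2(\Omega\times Y)}$, and the Robin boundary contributes the favorable term $-R\int_\Omega\int_{\Gamma_R}k\tilde\rho^2\,d\sigma_y dx\le0$ together with the coupling term $\int_\Omega\int_{\Gamma_R}k\tilde\pi\tilde\rho\,d\sigma_y dx$. I would bound the coupling term by Cauchy-Schwarz and Young with a free parameter $\delta$, and then control the resulting boundary norm of $\tilde\rho$ through the interpolation-trace inequality (Proposition \ref{sub:trace_theorem}) with its own small parameter, so that the $\|\nabla_y\tilde\rho\|^2$ generated on the right carries an arbitrarily small coefficient, while the coefficient of $\|\tilde\pi\|^2_{\L^2(\Omega)}$ scales like $1/\delta$.

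Finally, substituting the elliptic bound into the parabolic inequality turns the $\|\tilde\pi\|^2_{\L^2(\Omega)}$ term into $C_3\|\tilde\rho\|^2_{\L^2(\Omega\times Y)}+C_3\|\nabla_y\tilde\rho\|^2_{\L^2(\Omega\times Y)}$, weighted by the $1/\delta$ factor. Since $C_3$ is already fixed, choosing $\delta$ large makes the coefficient of $\|\nabla_y\tilde\rho\|^2$ smaller than $D$, so that all gradient terms are absorbed by the diffusion on the left; the compensating coefficient multiplying $\|\tilde\rho\|^2_{\L^2(\Omega\times Y)}$ grows with $\delta$ but stays finite, which is harmless for the final argument. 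Integrating in time and discarding the nonnegative gradient term leaves $\|\tilde\rho(t)\|^2_{\L^2(\Omega\times Y)}\le K\int_0^t\|\tilde\rho(s)\|^2_{\L^2(\Omega\times Y)}\,ds$, whence Gr\"{o}nwall's lemma together with $\tilde\rho(0)=0$ forces $\tilde\rho\equiv0$ and then $\tilde\pi\equiv0$. The main obstacle I anticipate is the elliptic step: controlling the self-referential pressure term is possible only because \ref{structuralAssump2} makes $C^*c_P(\Omega)<1$, and it is this quantitative smallness, rather than the subsequent constant bookkeeping, that is genuinely indispensable for isolating $\|\tilde\pi\|_{\L^2(\Omega)}$ and thereby closing the estimate.
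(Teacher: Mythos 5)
Your proposal is correct, and its skeleton is the same as the paper's: subtract the two weak formulations, test with the differences, derive an elliptic bound for $\|\tilde\pi\|_{\L^2(\Omega)}$ from \ref{structuralAssump}, \ref{secondARG}, Poincar\'{e}'s inequality and the smallness condition \ref{structuralAssump2}, insert it into the parabolic energy identity whose Robin term is controlled by the interpolation-trace inequality, and close with Gr\"{o}nwall using $\tilde\rho(0)=0$.

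Where you genuinely deviate from the paper is in the placement of the Young parameters, and your version is the more careful one. The paper's first key estimate \eqref{eq:key_estimate1} records the coupling term as $c_{\e_1}\|z\|^2_{\L^2(\Omega;\L^2(Y))}$, although \ref{secondARG} only delivers a bound by the full norm $\|z\|^2_{\L^2(\Omega;H^1(Y))}$, i.e.\ including $\|\nabla_y z\|^2$. With that gradient present, the paper's final absorption step becomes delicate: the Robin coupling in \eqref{eq:energy} is estimated there with Young parameter $1$, so $\|q\|^2_{\L^2(\Omega)}$ enters with the \emph{fixed} coefficient $\bar{k}|\Gamma_R|/2$, and after substituting the elliptic bound this produces a multiple of $\|\nabla_y z\|^2_{\L^2(\Omega;\L^2(Y))}$ whose size is not at anyone's disposal and need not be smaller than $D$. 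You anticipate exactly this: by spending a free Young parameter $\delta$ on $\int_\Omega\int_{\Gamma_R}k\tilde\pi\tilde\rho\,d\sigma_ydx$, the coefficient of $\|\tilde\pi\|^2_{\L^2(\Omega)}$ --- and hence of the dangerous $\|\nabla_y\tilde\rho\|^2$ contribution --- scales like $1/\delta$ and can be absorbed by the diffusion, while the resulting large boundary term $\sim\delta\int_\Omega\int_{\Gamma_R}\tilde\rho^2\,d\sigma_ydx$ is tamed by the interpolation-trace inequality with its own small parameter, at the harmless price of a large Gr\"{o}nwall constant. (Retaining the sign of $-R\int_\Omega\int_{\Gamma_R}k\tilde\rho^2\,d\sigma_ydx\le0$ rather than bounding it by $R\bar{k}$ times the trace norm, as the paper does, is a further minor simplification.) In short: same route, but your parameter bookkeeping closes the absorption argument at precisely the step where the paper's proof, as written, glosses over the gradient term.
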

\begin{proof}
The weak formulation of the uncoupled problem is:
find $(\pi,\rho)\in\L^2(0,T;H_0^1(\Omega))\times \L^2(0,T;\L^2(\Omega;H^1(Y)))$ where $\rho(t=0)=\rho_I$ and for a.e. $t\in[0,T]$ the equations
\begin{equation}
\label{eq:p2_uniq}
A\rho_F\int_\Omega\nabla_x\pi\cdot\nabla_x\varphi dx=\int_\Omega f(\pi,\rho)\varphi dx
\end{equation}
and
\begin{equation}
\label{eq:p3_uniq}
\int_\Omega\int_Y\partial_t\rho\psi dydx+\int_\Omega \int_YD\nabla_y \rho\cdot\nabla_y\psi dydx= \int_\Omega\int_{\Gamma_R}k(\pi+p_F-R\rho)\psi d\sigma_ydx  
\end{equation}
hold for all $\varphi\in H^1_0(\Omega)$ and all $\psi\in\L^2(\Omega;H^1(Y))$.

Assume that two pairs of solutions exist: $(\pi_1,\rho_1)$ and $(\pi_2,\rho_2)$. Let $q:=\pi_1-\pi_2$ and $z:=\rho_1-\rho_2$.
If we substitute the two solutions in \eqref{eq:p2_uniq} and \eqref{eq:p3_uniq} and subtract, we obtain that
\begin{equation}
\label{eq:p2_uniq_pair}
A\rho_F \int_\Omega\nabla_x q\cdot\nabla_x\varphi dx=\int_\Omega \left( f(\pi_1,\rho_1) -f(\pi_2,\rho_2) \right)\varphi dx
\end{equation}
and
\begin{equation}
\label{eq:p3_uniq_pair}
\int_\Omega\int_Y\partial_t z \psi dydx+\int_\Omega\int_YD\nabla_y z\cdot\nabla_y\psi dydx=\int_\Omega\int_{\Gamma_R} k(q - Rz)\psi d\sigma_ydx
\end{equation}
for all $\varphi\in H^1_0(\Omega)$ and all $\psi\in\L^2(\Omega;H^1(Y))$.

Choosing specific test function $\varphi = q$, using Young's inequality with parameter $\e_1>0$ and \ref{structuralAssump}, we obtain from \eqref{eq:p2_uniq_pair} the first key estimate
\begin{align}
\label{eq:key_estimate1}
A\rho_F \|\nabla_x q\|^2_{\L^2(\Omega)}&\leq (C^*+\e_1)\|q\|_{\L^2(\Omega)}^2+c_{\e_1}\|z\|^2_{\L^2(\Omega,\L^2(Y))}.
\end{align}
We focus on \eqref{eq:p3_uniq_pair}, which, using test function $\psi=z$, yields
\begin{equation}
\label{eq:energy}
\frac{1}{2}\frac{d}{dt}||z||^2_{\L^2(\Omega;\L^2(Y))} + D||\nabla_y z||^2_{\L^2(\Omega;\L^2(Y))}=\int_\Omega \int_{\Gamma_R} k(q - Rz)z.
\end{equation}
Now, we estimate the right hand side of (\ref{eq:energy}) by using trace inequality and the fact that $k\le\bar{k}$ on $\Gamma_R$. We have
\begin{eqnarray}
\nonumber
\int_\Omega\int_{\Gamma_R}|k(q-Rz)z|d\sigma_y dx&\le&\bar{k}\int_\Omega\int_{\Gamma_R}|qz|d\sigma_y dx+R\bar{k}\int_\Omega\int_{\Gamma_R}z^2d\sigma_y dx\\
\nonumber
&\le&\frac{\bar{k}|\Gamma_R|}{2}\|q\|^2_{\L^2(\Omega)}+\left(R+\frac{1}{2}\right)\bar{k}\int_\Omega\int_{\Gamma_R}z^2d\sigma_y dx.
\end{eqnarray}
The second term at the right-hand side of the previous inequality can be estimated by using the trace inequality and Young's inequality with parameter $\e>0$:
\begin{eqnarray}
\nonumber
\int_\Omega\int_{\Gamma_R}z^2d\sigma_y dx\le\e\|\nabla_yz\|^2_{\L^2(\Omega;\L^2(Y))}+\frac{c_0}{\e}\|z\|^2_{\L^2(\Omega;\L^2(Y))}
\end{eqnarray}
for some absolute constant $c_0>0$.
Using the previous estimates and rearranging \eqref{eq:energy}, we obtain
\begin{eqnarray}
\nonumber
\frac{1}{2}\frac{d}{dt}||z||^2_{\L^2(\Omega;\L^2(Y))} + (D-\e)||\nabla_y z||^2_{\L^2(\Omega;\L^2(Y))}\\
\label{epsilonest}
\le\frac{\bar{k}|\Gamma_R|}{2}\|q\|^2_{\L^2(\Omega)}+\frac{c_0\bar{k}(R+1/2)}{\e}\|z\|^2_{\L^2(\Omega;\L^2(Y))}.
\end{eqnarray}
 By Poincare's inequality, we have
\begin{equation}
\nonumber
\|q\|^2_{\L^2(\Omega)}\le c_P(\Omega)\|\nabla_x q\|^2_{\L^2(\Omega)},
\end{equation}
where $c_P(\Omega)$ is the Poincar\'{e} constant of the domain $\Omega$. Using this in \eqref{eq:key_estimate1}, we obtain
\begin{equation}
\nonumber
A\rho_F\|\nabla_xq\|^2_{\L^2(\Omega)}\le (C^*+\e_1)c_P(\Omega)\|\nabla_xq\|^2_{\L^2(\Omega)}+ C\|z\|^2_{\L^2(\Omega,\L^2(Y))}.
\end{equation}
By \ref{structuralAssump2}, we may take $\e_1>0$ small enough such that $(C^*+\e_1)c_P(\Omega)<A\rho_F$. Then we obtain
\begin{equation}
\label{structEst}
\|q\|^2_{\L^2(\Omega)}\le c_P(\Omega)\|\nabla_xq\|^2_{\L^2(\Omega)}\le \frac{Cc_P(\Omega)}{A\rho_F-(C^*+\e_1)c_P(\Omega)}\|z\|^2_{\L^2(\Omega,\L^2(Y))}.
\end{equation}
Whence, it follows from the previous estimate and (\ref{epsilonest}) with $\e=D/2$ that
\begin{equation}
\nonumber
\frac{1}{2}\frac{d}{dt}||z||^2_{\L^2(\Omega;\L^2(Y))} + \frac{D}{2}||\nabla_y z||^2_{\L^2(\Omega;\L^2(Y))}\le C\|z\|^2_{\L^2(\Omega;\L^2(Y))}
\end{equation}
By using Gr\"{o}nwall's inequality and the fact that $z(0,x,y)=0$, it follows that $z=0$. From (\ref{structEst}), we obtain $q=0$ as well. This demonstrates the uniqueness.
\end{proof}

\section{Energy and stability estimates}

We start this section by stating the following energy estimates for our problem.

\begin{proposition}
\label{energy1}
Assume \ref{parameters}-\ref{secondARG} and let $(u,v)$ be a weak solution to
\begin{equation}
\label{en1}
\begin{cases}
-\Delta_xu=f(u,v) & \mbox{ in }\Omega\\
\partial_tv-D\Delta_yv = 0 & \mbox{ in }\Omega\times Y\\
-D\nabla_yv\cdot n_y+k(Rv-u)=g&\mbox{ on } \Omega\times\Gamma_R\\
-D\nabla_yv\cdot n_y=0&\mbox{ on }\Omega\times\Gamma_N\\
u=0& \mbox{ at }\partial\Omega\\
v(t=0)=v_I&\mbox{ in } \Omega\times Y.
\end{cases}
\end{equation}
Then the following energy estimate hold
\begin{align}
\nonumber
&\|u\|^2_{\L^2(0,T;H^1(\Omega))}+\|v\|^2_{\L^2(0,T;\L^2(\Omega,H^1(Y)))}\\
\label{en2}
&\le C\left(\|g\|^2_{\L^2(0,T;\L^2(\Omega;\L^2(\Gamma_R)))}+\|v_I\|^2_{\L^2(\Omega;\L^2(Y))}\right).
\end{align}
\end{proposition}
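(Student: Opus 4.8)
The plan is to decouple the estimate by first controlling $u$ through $v$ via the elliptic equation, and then closing a Grönwall argument on $v$ alone. The weak formulation of \eqref{en1} reads: for a.e.\ $t$ and all $\varphi\in H_0^1(\Omega)$, $\psi\in\L^2(\Omega;H^1(Y))$,
\begin{equation}
\nonumber
\int_\Omega\nabla_xu\cdot\nabla_x\varphi\,dx=\int_\Omega f(u,v)\varphi\,dx,
\end{equation}
\begin{equation}
\nonumber
\int_\Omega\int_Y\partial_tv\,\psi\,dydx+D\int_\Omega\int_Y\nabla_yv\cdot\nabla_y\psi\,dydx=\int_\Omega\int_{\Gamma_R}(g-kRv+ku)\psi\,d\sigma_ydx.
\end{equation}
Testing the first with $\varphi=u$, applying Cauchy--Schwarz, then \ref{secondARG} and Poincar\'e's inequality (Proposition \ref{poincare}), gives $\|\nabla_xu\|_{\L^2(\Omega)}\le C\|v\|_{\L^2(\Omega;H^1(Y))}$ and hence, pointwise in $t$,
\begin{equation}
\label{plan-elliptic}
\|u\|^2_{H^1(\Omega)}\le C_0\|v\|^2_{\L^2(\Omega;H^1(Y))},
\end{equation}
where $C_0$ depends only on $c_P(\Omega)$ and the structural constant in \ref{secondARG}. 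This reduces everything to an estimate on $v$.

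Next I would test the parabolic equation with $\psi=v$ to obtain the energy identity
\begin{align}
\nonumber
\frac12\frac{d}{dt}\|v\|^2_{\L^2(\Omega;\L^2(Y))}+D\|\nabla_yv\|^2_{\L^2(\Omega;\L^2(Y))}
&=\int_\Omega\int_{\Gamma_R}gv\,d\sigma_ydx-R\int_\Omega\int_{\Gamma_R}kv^2\,d\sigma_ydx\\
\nonumber
&\quad+\int_\Omega\int_{\Gamma_R}kuv\,d\sigma_ydx.
\end{align}
The middle term is nonpositive since $k\ge\underline k>0$, so it may be discarded. For the remaining two I would use $0<k\le\bar k$, Young's inequality with a parameter $\mu$, and the fact that $u$ is independent of $y$, so that $\int_{\Gamma_R}u^2\,d\sigma_y=|\Gamma_R|\,u^2$. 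This bounds the right-hand side by $\tfrac12\|g\|^2_{\L^2(\Omega;\L^2(\Gamma_R))}+C_1\|v\|^2_{\L^2(\Omega;\L^2(\Gamma_R))}+C_2\|u\|^2_{\L^2(\Omega)}$ with $C_1=\tfrac12+\tfrac{\bar k\mu}{2}$ and $C_2=\tfrac{\bar k|\Gamma_R|}{2\mu}$. The interpolation-trace inequality (Proposition \ref{sub:trace_theorem}) then replaces $\|v\|^2_{\L^2(\Omega;\L^2(\Gamma_R))}$ by $\rho\|\nabla_yv\|^2_{\L^2(\Omega;\L^2(Y))}+c_\rho\|v\|^2_{\L^2(\Omega;\L^2(Y))}$, while \eqref{plan-elliptic} replaces $\|u\|^2_{\L^2(\Omega)}$ by $C_0\big(\|v\|^2_{\L^2(\Omega;\L^2(Y))}+\|\nabla_yv\|^2_{\L^2(\Omega;\L^2(Y))}\big)$.

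The main obstacle is that both substitutions reintroduce the dissipation term $\|\nabla_yv\|^2_{\L^2(\Omega;\L^2(Y))}$ on the right, with coefficient $C_1\rho+C_2C_0$, which must be made strictly smaller than $D$ to be absorbed into the left. The resolution is to exploit the two free parameters in the correct order: first choose $\mu$ large so that $C_2C_0=\bar k|\Gamma_R|C_0/(2\mu)\le D/4$; with $\mu$ now fixed, $C_1$ is a fixed constant, so I then choose $\rho$ small enough that $C_1\rho\le D/4$. After absorbing $\tfrac{D}{2}\|\nabla_yv\|^2_{\L^2(\Omega;\L^2(Y))}$ to the left and multiplying by two, this leaves
\begin{equation}
\nonumber
\frac{d}{dt}\|v\|^2_{\L^2(\Omega;\L^2(Y))}+D\|\nabla_yv\|^2_{\L^2(\Omega;\L^2(Y))}\le\|g\|^2_{\L^2(\Omega;\L^2(\Gamma_R))}+C_3\|v\|^2_{\L^2(\Omega;\L^2(Y))},
\end{equation}
with $C_3$ depending only on structural data.

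Finally, dropping the nonnegative gradient term and applying Gr\"onwall's inequality (using $v(0)=v_I$) yields $\sup_{[0,T]}\|v\|^2_{\L^2(\Omega;\L^2(Y))}\le C\big(\|g\|^2_{\L^2(0,T;\L^2(\Omega;\L^2(\Gamma_R)))}+\|v_I\|^2_{\L^2(\Omega;\L^2(Y))}\big)$; integrating this in time controls $\|v\|^2_{\L^2(0,T;\L^2(\Omega;\L^2(Y)))}$, and integrating the full differential inequality over $[0,T]$ then controls $\|\nabla_yv\|^2_{\L^2(0,T;\L^2(\Omega;\L^2(Y)))}$. Together these give the bound on $\|v\|^2_{\L^2(0,T;\L^2(\Omega;H^1(Y)))}$, and feeding it into the time integral of \eqref{plan-elliptic} produces the bound on $\|u\|^2_{\L^2(0,T;H^1(\Omega))}$, completing \eqref{en2}. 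The only regularity point worth noting is that testing with $v$ and writing $\tfrac{d}{dt}\|v\|^2$ is justified by the regularity of $v$ supplied by Proposition \ref{parabolic}.
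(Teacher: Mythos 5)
Your proof is correct and follows essentially the route the paper intends: the paper omits the proof of Proposition \ref{energy1}, noting it follows by the same energy arguments as Theorem \ref{stability1}, and your argument (testing the elliptic equation with $u$ and the parabolic one with $v$, using Poincar\'e, the interpolation-trace inequality, Young's inequality with parameters, absorption into the dissipation, and Gr\"onwall) is precisely that scheme. The one delicate point, that assumption \ref{secondARG} carries the $H^1(Y)$-norm and hence reintroduces $\|\nabla_y v\|^2_{\L^2(\Omega;\L^2(Y))}$ on the right, is handled correctly by your ordering of the parameter choices ($\mu$ large first, then $\rho$ small).
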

The proof of Proposition \ref{energy1} follows by similar arguments as the proof of Theorem \ref{stability1} below, therefore we omit it.

We proceed to study the stability of solutions with respect to some of the parameters involved. Some preliminary remarks:
\begin{itemize}
\item We do not need to study the stability of the solution with respect to $\rho_F, p_F$ and $R$. Recall that $R$ is an universal physical constant, while $\rho_F, p_F$ fix the type of fluid and gas we are considering.
\item We could investigate the stability of $(\pi,\rho)$ with respect to structural changes into the non-linearity $f(\cdot,\cdot)$.  We omit to do so mainly because our main intent lies in understanding the role of the micro-macro Robin coefficient $k$.
\item For this stability proof, we decide to use a direct method which relies essentially on energy estimates; see e.g. \cite{munty2009}.
\end{itemize}

For $i\in\{ 1,2\}$, let $(\pi_i,\rho_i)$ be two weak solutions  corresponding to the sets of data $(\rho_{Ii},A_i, D_i, k_i)$,
where $\rho_{Ii},A_i, D_i, k_i$ denote the initial data, diffusion coefficients and mass-transfer coefficients of the solution $(\pi_i,\rho_i)$. Denote
\begin{equation}
\nonumber
\delta u:=u_2-u_1\quad\mbox{where}\quad u\in\left\{\pi,\rho,\rho_I, A, D, k\right\}.
\end{equation}

\begin{theorem}
\label{stability1}
Assume that for $i=1,2$, $(A_i,D_i)$ belongs to a fixed compact subset of $\R^2$, that $k_i\in\mathcal{K}$ and that $\|\rho_{Ii}\|_{\L^2(\Omega;\L^2(Y))}\le C$. 
Let $(\pi_i,\rho_i)~~(i=1,2)$ be weak solutions to (\ref{generalProblem}) corresponding to the choices of data above. Then the estimate
\begin{align}
\nonumber
\|\delta\pi\|^2_{\L^2(0,T;H_0^1(\Omega))}+\|\delta\rho\|^2_{\L^2(0,T;\L^2(\Omega;H^1(Y)))}\\
\label{eq:cont_data_ineq}
\le c\left(\|\delta k\|^2_{\L^2(\Gamma_R)}+|\delta A|+|\delta D|+\|\delta\rho_I\|^2_{\L^2(\Omega;\L^2(Y))}\right),
\end{align}
holds
\end{theorem}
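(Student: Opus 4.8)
The plan is to run a direct energy argument on the difference of the two solutions, in the spirit of the uniqueness proof (Theorem~\ref{uniquenessThm}), but now tracking the perturbations of the data as extra source terms. First I would write the weak formulations \eqref{eq:p2_uniq}--\eqref{eq:p3_uniq} for $(\pi_i,\rho_i)$, $i=1,2$, subtract them, and set $q:=\delta\pi$, $z:=\delta\rho$. Because the coefficients themselves differ, the subtraction no longer yields a homogeneous system: writing $A_2=A_1+\delta A$, $D_2=D_1+\delta D$, $k_2=k_1+\delta k$, I would isolate the ``commutator'' sources $\delta A\,\rho_F\nabla_x\pi_2$ (elliptic), $\delta D\,\nabla_y\rho_2$ (parabolic bulk), and $\delta k\,(\pi_2+p_F-R\rho_2)$ (Robin boundary). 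Concretely, the elliptic difference reads
\begin{equation}
\nonumber
A_1\rho_F\int_\Omega\nabla_x q\cdot\nabla_x\varphi\,dx=\int_\Omega\big(f(\pi_2,\rho_2)-f(\pi_1,\rho_1)\big)\varphi\,dx-\delta A\,\rho_F\int_\Omega\nabla_x\pi_2\cdot\nabla_x\varphi\,dx,
\end{equation}
while the parabolic difference, once tested with $\psi=z$, carries the two extra terms $-\delta D\int_\Omega\int_Y\nabla_y\rho_2\cdot\nabla_y z$ and $\int_\Omega\int_{\Gamma_R}\delta k\,(\pi_2+p_F-R\rho_2)z$ in addition to the Robin coupling $\int_\Omega\int_{\Gamma_R}k_1(q-Rz)z$.

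Next I would derive the elliptic control. Testing the elliptic difference with $\varphi=q$, splitting $f(\pi_2,\rho_2)-f(\pi_1,\rho_1)$ exactly as in the uniqueness proof, and invoking \ref{structuralAssump} (Lipschitz in the first slot), \ref{secondARG} (control of the second-slot difference by $\|z\|_{\L^2(\Omega;H^1(Y))}$), Poincar\'e's inequality, and Young's inequality on the $\delta A$ term, I expect to reach
\begin{equation}
\nonumber
\|\nabla_x q\|^2_{\L^2(\Omega)}\le c\,\|z\|^2_{\L^2(\Omega;H^1(Y))}+c\,|\delta A|^2\,\|\nabla_x\pi_2\|^2_{\L^2(\Omega)},
\end{equation}
where absorbing the $f$-Lipschitz contribution into the coercive left-hand side uses the structural smallness $C^*c_P(\Omega)<A_1\rho_F$, i.e.\ the analogue of \ref{structuralAssump2} already exploited in \eqref{structEst}. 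This is what decouples $q$ from $z$ up to the $\delta A$ source.

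I would then work with the parabolic energy identity obtained from $\psi=z$,
\begin{equation}
\nonumber
\tfrac12\tfrac{d}{dt}\|z\|^2_{\L^2(\Omega;\L^2(Y))}+D_1\|\nabla_y z\|^2_{\L^2(\Omega;\L^2(Y))}=-\delta D\!\int_\Omega\!\int_Y\!\nabla_y\rho_2\cdot\nabla_y z+\!\int_\Omega\!\int_{\Gamma_R}\!k_1(q-Rz)z+\!\int_\Omega\!\int_{\Gamma_R}\!\delta k\,(\pi_2+p_F-R\rho_2)z,
\end{equation}
estimating the right-hand side term by term. The $\delta D$ term is handled by Young's inequality, keeping $c_\e|\delta D|^2\|\nabla_y\rho_2\|^2$ and absorbing $\e\|\nabla_y z\|^2$. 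The Robin coupling is treated as in \eqref{eq:energy}--\eqref{epsilonest}: the interpolation-trace inequality (Proposition~\ref{sub:trace_theorem}) converts the $\Gamma_R$-integrals into a small multiple of $\|\nabla_y z\|^2$ plus $\|z\|^2$, while the cross term $\int\int_{\Gamma_R}k_1 q\,z$ is bounded by $\e'\|q\|^2_{\L^2(\Omega)}+c\|z\|^2_{\L^2(\Omega;\L^2(\Gamma_R))}$ and closed by feeding in the elliptic bound above (via Poincar\'e), with $\e'$ small enough that the induced $\|\nabla_y z\|^2$ is reabsorbed into $D_1\|\nabla_y z\|^2$. After collecting the small parameters and applying Gr\"onwall's inequality on $[0,T]$ with $z(0)=\delta\rho_I$, I would integrate in time: the factors $\int_0^T\|\nabla_x\pi_2\|^2$ and $\int_0^T\|\nabla_y\rho_2\|^2$ are bounded uniformly by the energy estimate (Proposition~\ref{energy1}) together with the uniform data bounds, and since $(A_i,D_i)$ range over a fixed compact set the bounded quantities $|\delta A|^2,|\delta D|^2$ are dominated by $|\delta A|,|\delta D|$, producing the first-power dependence in \eqref{eq:cont_data_ineq}. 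Re-inserting the resulting control of $\|z\|^2_{\L^2(0,T;\L^2(\Omega;H^1(Y)))}$ into the elliptic estimate yields the bound on $\|\delta\pi\|^2_{\L^2(0,T;H^1_0(\Omega))}$ and closes the argument.

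The step I expect to be genuinely delicate is the $\delta k$ boundary term $\int_\Omega\int_{\Gamma_R}\delta k\,(\pi_2+p_F-R\rho_2)z$. Since $\delta k\in\L^2(\Gamma_R)$ depends only on $y$, factoring the $y$-integral and Cauchy--Schwarz give a bound $\|\delta k\|_{\L^2(\Gamma_R)}\,\|h\|_{\L^2(\Gamma_R)}$ with $h(y)=\int_\Omega(\pi_2+p_F-R\rho_2)\,z\,dx$, and to turn this into the desired $c\|\delta k\|^2_{\L^2(\Gamma_R)}$ plus an absorbable trace term I need a uniform-in-$y$ bound $\sup_{y\in\Gamma_R}\|(\pi_2+p_F-R\rho_2)(t,\cdot,y)\|_{\L^2(\Omega)}<\infty$ on the \emph{reference} solution. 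This is unavailable from the basic energy estimate and forces a regularity bootstrap: $\pi_2$ is $y$-independent and controlled by Proposition~\ref{energy1}, but for $\rho_2$ I would invoke the parabolic regularity Proposition~\ref{parabolic} (with $m=d$, hence requiring $\rho_{I2}\in H^1(\Omega;H^d(Y))$) to place $\rho_2\in\L^2(0,T;H^1(\Omega;H^{d+1}(Y)))$, and then combine the trace and Sobolev embeddings of Propositions~\ref{trace} and \ref{sobolev} to obtain $\sup_{y\in\Gamma_R}|\rho_2(t,x,y)|\le c\,\|\rho_2(t,x,\cdot)\|_{H^{d+1}(Y)}$, which integrates in $x$ and $t$ to the needed bound. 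Securing this additional regularity of the reference solution, while simultaneously keeping the $\delta\pi$--$\delta\rho$ coupling through $\Gamma_R$ under the structural smallness condition so that the Gr\"onwall loop actually closes, is the main obstacle.
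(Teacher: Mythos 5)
Your proposal is correct and follows essentially the same route as the paper's own proof: a direct energy argument on the difference system, with the coefficient perturbations isolated as commutator sources, the elliptic part closed via \ref{structuralAssump}, \ref{secondARG}, Poincar\'e and the structural smallness, the parabolic part via the interpolation-trace inequality, absorption of small gradient terms, and Gr\"onwall. Even the point you flag as delicate is treated the same way in the paper, which simply assumes $\int_\Omega\rho_1^2\,dx\le K$ uniformly on $\Gamma_R$ ``by taking $\rho_{I1}$ smooth enough''; your bootstrap through Proposition~\ref{parabolic} and the embeddings of Propositions~\ref{sobolev} and \ref{trace} just makes that assumption explicit.
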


%We take $(A_i,D_i,k_i)\in W$ a compact set in $\R^3$ and $\rho_{Ii} \in \L^2(\Omega; \L^2(Y))$.
%Recall \eqref{eq:sol_space}. Under the assumptions in \ref{sec:assumptions}, mapping function
%\begin{equation}
%  W\times \L^2(\Omega; \L^2(Y)) \to H^1_0(\Omega)\times M,
%\end{equation}
%mapping $(\kappa,A,D,\rho_I)$ into $(\pi_i,\xi_i)$ is Lipschitz continuous.
%I.e. there exists a constant $c > 0$ such that
%\begin{equation}
   % ||\delta u ||^2_{H^1_0(\Omega)} + ||\delta \xi||^2_\mathcal{H} %\leq c \left( |\delta \kappa|^2 + |\delta A|^2 + |\delta D|^2 + ||\delta \rho_I||^2_{\L^2}(\Omega,\L^2(Y)) \right).
  %  \label{eq:cont_data_ineq}
%\end{equation}

\begin{proof}
We have for $i=1,2$
\begin{equation}
\nonumber
A_i\rho_F\int_\Omega\nabla_x\pi_i\cdot\nabla_x\varphi dx=\int_\Omega f(\pi_i,\rho_i)\varphi dx
\end{equation}
and
\begin{equation}
\nonumber
\int_\Omega\int_Y\partial_t\rho_i\psi dydx+\int_\Omega\int_YD_i \nabla_y\rho_i\cdot\nabla_y\psi dydx=\int_\Omega\int_{\Gamma_R}k_i \left( \pi_i + p_F - R\rho_i\right)\psi d\sigma_ydx,
\end{equation}
for all $\varphi \in H^1_0(\Omega)$ and $\psi\in\L^2(\Omega;H^1(Y))$.

Subtracting the corresponding equations and then testing with $\varphi:=\pi_2-\pi_1$ and $\psi:=\rho_2-\rho_1$ gives:
\begin{equation}
\label{eq:alpha}
\rho_F\left(A_2\int_\Omega\nabla_x\pi_2\cdot\nabla_x\varphi dx-A_1\int_\Omega\nabla_x\pi_1\cdot\nabla_x\varphi dx\right) = \int_\Omega\left( f(\pi_2,\rho_2)- f(\pi_1,\rho_1)\right)\varphi\,dx,   
\end{equation}
and
\begin{equation}
\begin{split}
&\frac{d}{2dt}\|\psi\|^2_{\L^2(\Omega;\L^2(Y))}+\int_\Omega\int_Y\left( D_2 \nabla_y \rho_2 - D_1 \nabla_y \rho_1\right)\cdot\nabla_y\psi dydx\\
\label{eq:beta}
&=\int_\Omega\int_{\Gamma_R}\left(k_2(\pi_2+p_F-R\rho_2)-k_1(\pi_1+p_F -R\rho_1)\right)\psi d\sigma_ydx.
\end{split}
\end{equation}
Regarding \eqref{eq:alpha}, note that
\begin{eqnarray}
\nonumber
A_2\int_\Omega\nabla_x\pi_2\cdot\nabla_x\varphi dx-A_1\int_\Omega\nabla_x\pi_1\cdot\nabla_x\varphi dx\\
\nonumber
=A_2\|\nabla_x\varphi\|^2_{\L^2(\Omega)}+(A_2-A_1)\int\nabla_x\pi_1\cdot\nabla_x\varphi dx
\end{eqnarray} 
Using \ref{structuralAssump} and \ref{secondARG}, we may estimate the right-hand side of (\ref{eq:alpha}) and obtain
\begin{equation}
\nonumber
A_2\rho_F\|\nabla_x\varphi\|^2_{\L^2(\Omega)}\le C^*\|\varphi\|^2_{\L^2(\Omega)}+ c\left(\|\psi\|^2_{\L^2(\Omega;\L^2(Y))}+|\delta A|\int_\Omega|\nabla_x\pi_1||\nabla_x\varphi|dx\right).
\end{equation}
Using Poincar\'{e}'s inequality, assumptions on $f$ and Young's inequality with parameter $\e>0$, we get
\begin{align}
\nonumber
\|\nabla_x\varphi\|^2_{\L^2(\Omega)}&\le  c\left(\|\psi\|^2_{\L^2(\Omega;\L^2(Y))}+|\delta A|\int_\Omega|\nabla_x\pi_1||\nabla_x\varphi|dx\right)\\
\nonumber
&\le c\e\|\nabla_x\varphi\|^2_{\L^2(\Omega)}+c\left(\|\psi\|^2_{\L^2(\Omega;\L^2(Y))}+|\delta A|\|\nabla_x\pi_1\|^2_{\L^2(\Omega)}\right).
\end{align}
Choosing $\e=1/(2c)$, rearranging and using energy estimates for $\pi_1$, we obtain
\begin{equation}
\label{phiestimate}
\|\nabla_x\varphi\|^2_{\L^2(\Omega)}\le c\left(\|\psi\|^2_{\L^2(\Omega;\L^2(Y))}+|\delta A|\right).
\end{equation}
We proceed to estimate $\|\psi\|^2_{\L^2(\Omega;\L^2(Y))}$, using \eqref{eq:beta}. Note that
\begin{align}
\nonumber
&\int_\Omega\int_Y\left( D_2 \nabla_y\rho_2 - D_1 \nabla_y \rho_1\right)\cdot\nabla_y\psi dydx\\
\nonumber
&=(D_2-D_1)\int_\Omega\int_Y\nabla_y\rho_2\cdot\nabla_y\psi+D_1\|\nabla_y\psi\|^2_{\L^2(\Omega;\L^2(Y))}.
\end{align}
Hence, it follows that
\begin{align}
\nonumber
&\frac{d}{2dt}\|\psi\|^2_{\L^2(\Omega;\L^2(Y))}+D_1\|\nabla_y\psi\|^2_{\L^2(\Omega;\L^2(Y))}\le|\delta D|\int_\Omega\int_Y|\nabla_y\xi_1||\nabla_y\psi\|dydx\\
&+\int_\Omega\int_{\Gamma_R}|(k_2-k_1)\psi|+|(k_2\pi_2 - k_1\pi_1)\psi|+|R(k_2\rho_2 - k_1\rho_1)\psi| d\sigma_ydx.
\end{align}
We have
\begin{eqnarray}
\nonumber
\int_\Omega\int_{\Gamma_R}|k_2-k_1||\psi|d\sigma_ydx&\le&\e\int_\Omega\int_{\Gamma_R}\psi^2d\sigma_ydx+c_\e|\Omega|\|k_2-k_1\|^2_{\L^2(\Gamma_R)}\\
\nonumber
&\le&c\e\left(\|\psi\|^2_{\L^2(\Omega;\L^2(Y))}+\|\nabla_y\psi\|^2_{\L^2(\Omega;\L^2(Y))}\right)+c\|k_2-k_1\|^2_{\L^2(\Gamma_R)}.
\end{eqnarray}
Further,
\begin{align}
\nonumber
&\int_\Omega\int_{\Gamma_R}|(k_2\pi_2 - k_1\pi_1)\psi|d\sigma_ydx\le c\|k_2-k_1\|^2_{\L^2(\Gamma_R)}+\int_\Omega\int_{\Gamma_R}|k_1||\pi_2-\pi_1||\psi|d\sigma_ydx\\
\nonumber
&\le c\|k_2-k_1\|^2_{\L^2(\Gamma_R)}+\e\bar{k}|\Gamma_R|\|\pi_2-\pi_1\|^2_{\L^2(\Omega)}+c_\e\int_\Omega\int_{\Gamma_R}\psi^2d\sigma_ydx\\
\nonumber
&\le c\|k_2-k_1\|^2_{\L^2(\Gamma_R)}+\e\left(\|\nabla_x\varphi\|^2_{\L^2(\Omega)}+\|\nabla_y\psi\|^2_{\L^2(\Omega;\L^2(Y))}\right)+c_\e\|\psi\|^2_{\L^2(\Omega;\L^2(Y))}.
\end{align}
Finally,
\begin{align}
\nonumber
&R\int_\Omega\int_{\Gamma_R}|(k_2\rho_2 - k_1\rho_1)\psi| d\sigma_ydx\le R\int_\Omega\int_{\Gamma_R}k_2^2\psi^2+(k_2-k_1)^2\rho_1^2d\sigma_ydx.
\end{align}
We assume that for all $y\in\Gamma_R$, we have
$$
\int_\Omega\rho_1^2dx\le K,
$$
this can be ensured by taking $\rho_{I1}$ smooth enough. Hence,
\begin{align}
\nonumber
&\int_\Omega\int_{\Gamma_R}k_2^2\psi^2+(k_2-k_1)^2\rho_1^2d\sigma_ydx\le \bar{k}\int_\Omega\int_{\Gamma_R}\psi^2d\sigma_ydx+K\|k_2-k_1\|^2_{\L^2(\Omega)}\\
\le
\nonumber
&\e\|\nabla_y\psi\|^2_{\L^2(\Omega;\L^2(Y))}+c_\e\|\psi\|^2_{\L^2(\Omega;\L^2(Y))}+K\|k_2-k_1\|^2_{\L^2(\Gamma_R)}.
\end{align}
Taking all the estimates above into consideration, and compensating terms by selecting small $\e>0$, we finally obtain
\begin{align}
\nonumber
&\frac{d}{2dt}\|\psi\|^2_{\L^2(\Omega;\L^2(Y))}+D_1\|\nabla_y\psi\|^2_{\L^2(\Omega;\L^2(Y))}\\
\nonumber
&\le c\left(\|\psi\|^2_{\L^2(\Omega;\L^2(Y))}+|\delta D|+\|k_2-k_1\|^2_{\L^2(\Gamma_R)}\right)
\end{align}
Applying Gr\"{o}nwall's inequality leads to
\begin{align}
\nonumber
\|\psi(t)\|^2_{\L^2(\Omega;\L^2(Y))}\le C\left[\|\delta\rho_I\|^2_{\L^2(\Omega;\L^2(Y))}+|\delta D|+\|k_2-k_1\|^2_{\L^2(\Gamma_R)}\right]
\end{align}
and, by integration over $[0,T]$,
\begin{align}
\nonumber
\|\psi\|^2_{\L^2(0,T;\L^2(\Omega;\L^2(Y)))}\le CT\left[\|\delta\rho_I\|^2_{\L^2(\Omega;\L^2(Y))}+|\delta D|+\|k_2-k_1\|^2_{\L^2(\Gamma_R)}\right].
\end{align}
It also follows that
\begin{align}
\nonumber
\|\nabla_y\psi\|^2_{\L^2(0,T;\L^2(\Omega;\L^2(Y)))}\le CT^2\left[\|\delta\rho_I\|^2_{\L^2(\Omega;\L^2(Y))}+|\delta D|+\|k_2-k_1\|^2_{\L^2(\Gamma_R)}\right].
\end{align}
Further, by (\ref{phiestimate}) and Poincar\'{e}'s inequality, we have
\begin{align}
\nonumber
\|\varphi\|^2_{\L^2(0,T;H_0^1(\Omega)}\le C\left(\|\psi\|^2_{\L^2(0,T;L^2(\Omega;\L^2(Y)))}+|\delta A|\right).
\end{align}
Taking all the above estimates together, we obtain
\begin{align}
\nonumber
&\|\varphi\|^2_{\L^2(0,T;H_0^1(\Omega)}+\|\psi\|^2_{\L^2(0,T;L^2(\Omega;H^1(Y)))}\\
\nonumber
&\le C\left(|\delta A|+|\delta D|+\|\delta\rho_I\|^2_{\L^2(\Omega;\L^2(Y))}+\|k_2-k_1\|^2_{\L^2(\Gamma_R)} \right),
\end{align}
which concludes the proof.
\end{proof}

\section{Local stability for the inverse Robin problem}

In this section, we shall study the inverse problem of recovering the micro-macro Robin coefficient $k\in\L^2(\Gamma_R)$ from measurement on $\Gamma_N$; the Neumann part of the boundary. (Usuallly, one thinks of $\Gamma_R$ as the inaccessible part of $\partial Y$, while $\Gamma_N$ is the accessible part.)
Our discussion is influenced by the work \cite{jiang16}. An alternative way of working could be by following the abstract result in \cite{bourgeois13}.

Recall that we denote 
$$
\mathcal{K}=\{k\in\L^2(\Gamma_R):0<\underline{k}\le k(y)\le\bar{k}\mbox{ for } y\in\Gamma_R\}, 
$$
the set of admissible Robin coefficients. 
Denote by $k^*$ the true Robin coefficient of our problem and define the set $\mathcal{V}(k^*,a)$ as
\begin{equation}
\nonumber
\mathcal{V}(k^*,a)=\left\{k\in\mathcal{K}:\|k-k^*\|_{\L^2(\Gamma_R)}\le a\right\}.
\end{equation}  
Below $(\pi(k),\rho(k))$ denotes the solution to (\ref{generalProblem}) corresponding to the coefficient $k\in\mathcal{K}$. Our main result is the following theorem.
\begin{theorem}
\label{mainTeo}
Assume that $\rho_I\in H^1(\Omega, H^d(Y))$ and $\rho(k^*)\ge c_0>0$ on $[0,T]\times\Omega\times\Gamma_R$. Then there exists $a>0$ such that
\begin{equation}
\|\rho(k_2)-\rho(k_1)\|_{\L^2(0,T;\L^2(\Omega;\L^2(\Gamma_N)))}\ge c\|k_2-k_1\|_{\L^2(\Gamma_R)}
\end{equation}
for every $k_1,k_2\in\mathcal{V}(k^*,a)$.
\end{theorem}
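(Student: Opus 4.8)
The plan is to recast the estimate as a lower bound for a forward measurement operator and its linearization, and then to absorb the nonlinear error by shrinking the localization radius $a$. Introduce
$$
F\colon\mathcal{K}\to\L^2(0,T;\L^2(\Omega;\L^2(\Gamma_N))),\qquad F(k)=\rho(k)|_{\Gamma_N},
$$
so the claim reads $\|F(k_2)-F(k_1)\|\ge c\|k_2-k_1\|_{\L^2(\Gamma_R)}$. The backbone has three parts: (i) $F$ is Fr\'echet differentiable near $k^*$ with Lipschitz derivative; (ii) the derivative $F'(k^*)$ is bounded below, $\|F'(k^*)h\|\ge c\|h\|_{\L^2(\Gamma_R)}$; (iii) a fundamental-theorem-of-calculus argument transfers (ii) to the nonlinear difference.

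For (i) I would exploit the strengthened hypothesis $\rho_I\in H^1(\Omega,H^d(Y))$: by the parabolic regularity of Proposition~\ref{parabolic}, $\rho(k)\in\L^2(0,T;H^1(\Omega;H^{d+1}(Y)))$, and Propositions~\ref{sobolev} and~\ref{trace} then supply $\L^\infty$-bounds and controlled traces on $\Gamma_R,\Gamma_N$. These bounds make the products $k(\pi+p_F-R\rho)$ and the nonlinearity behave well, let the auxiliary (linearized) problems be solved via Propositions~\ref{parabolic} and \ref{elliptic}, and yield the remainder estimate needed for differentiability; the scaling property \ref{scale} handles the elliptic subproblem exactly as in the proof of Theorem~\ref{existenceThm}. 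Differentiating \eqref{generalProblem} in a direction $h$, the pair $(\dot\pi,\dot\rho)=(\pi'(k^*)h,\rho'(k^*)h)$ solves
\begin{equation}
\nonumber
\begin{cases}
-A\rho_F\Delta_x\dot\pi=f'(\pi(k^*),\rho(k^*))[\dot\pi,\dot\rho] & \mbox{in }\Omega,\\
\partial_t\dot\rho-D\Delta_y\dot\rho=0 & \mbox{in }\Omega\times Y,\\
-D\nabla_y\dot\rho\cdot n_y=k^*(\dot\pi-R\dot\rho)+h\,(\pi(k^*)+p_F-R\rho(k^*)) & \mbox{on }\Omega\times\Gamma_R,\\
-D\nabla_y\dot\rho\cdot n_y=0 & \mbox{on }\Omega\times\Gamma_N,\\
\dot\pi=0 & \mbox{at }\partial\Omega,\\
\dot\rho(t=0)=0 & \mbox{in }\Omega\times Y,
\end{cases}
\end{equation}
and $F'(k^*)h=\dot\rho|_{\Gamma_N}$. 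The source $h\,(\pi(k^*)+p_F-R\rho(k^*))$ that $h$ injects on $\Gamma_R$ is precisely what the measurement on $\Gamma_N$ must detect.

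Step (ii) is the heart of the matter. First, \emph{injectivity}: if $F'(k^*)h=\dot\rho|_{\Gamma_N}=0$ then, since $\Gamma_N$ carries the homogeneous Neumann condition, $\dot\rho$ has vanishing Cauchy data on $(0,T)\times\Omega\times\Gamma_N$; parabolic unique continuation forces $\dot\rho\equiv0$, the linearized elliptic problem then gives $\dot\pi\equiv0$, and the Robin relation collapses to $h\,(\pi(k^*)+p_F-R\rho(k^*))=0$ on $\Gamma_R$. Here the positivity hypothesis $\rho(k^*)\ge c_0>0$ is used: it keeps the factor multiplying $h$ bounded away from zero, so the Robin relation can be inverted pointwise and $h=0$. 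To upgrade this qualitative injectivity to the quantitative bound $\|F'(k^*)h\|\ge c\|h\|$, I would combine the same inversion of the Robin relation (which controls $\|h\|_{\L^2(\Gamma_R)}$ by the Cauchy data of $\dot\rho$ on $\Gamma_R$) with an observability/continuation estimate transporting the measured data on the accessible $\Gamma_N$ down to the inaccessible $\Gamma_R$, exploiting the over-determination inherent in measuring, for all $x\in\Omega$ and all $t\in(0,T)$, a coefficient depending on $y\in\Gamma_R$ alone. I expect this \textbf{accessible-to-inaccessible propagation to be the main obstacle}: it is where the parabolic structure and the positivity assumption are indispensable. A contradiction--compactness device (normalizing $h_n=(k_2^n-k_1^n)/\|k_2^n-k_1^n\|_{\L^2(\Gamma_R)}$, passing to the Fr\'echet derivative in the limit, and invoking the injectivity just established) is the natural way to convert injectivity into a uniform lower bound at $k^*$.

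Finally, for (iii), write $F(k_2)-F(k_1)=\int_0^1 F'(k_1+s\,\delta k)\,\delta k\,ds$ with $\delta k=k_2-k_1$, and split $F'(k_1+s\,\delta k)=F'(k^*)+[F'(k_1+s\,\delta k)-F'(k^*)]$. By the Lipschitz continuity of $F'$ from (i), the bracketed term has operator norm $\le C\|k_1+s\,\delta k-k^*\|\le Ca$ on $\mathcal{V}(k^*,a)$, whence
\[
\|F(k_2)-F(k_1)\|\ge\|F'(k^*)\delta k\|-Ca\,\|\delta k\|\ge (c-Ca)\,\|\delta k\|_{\L^2(\Gamma_R)}.
\]
Choosing $a>0$ so small that $c-Ca>0$ gives the asserted inequality with a new constant. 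Thus the entire argument reduces to the lower bound (ii), i.e.\ to the observability estimate flagged above, which I regard as the crux and which rests decisively on the positivity of $\rho(k^*)$ on $\Gamma_R$.
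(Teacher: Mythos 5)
Your skeleton (differentiability, a lower bound for the linearization, then a Taylor/Lipschitz absorption to pass to the nonlinear map) parallels the paper's in spirit, and your steps (i) and (iii) are essentially the paper's Fr\'echet-differentiability lemma and its localization argument. The fatal problem is step (ii), which you correctly identify as the crux. The operator $F'(k^*):\L^2(\Gamma_R)\to\L^2(0,T;\L^2(\Omega;\L^2(\Gamma_N)))$ you want to bound below is \emph{compact}: by the same two-scale regularity and Aubin--Lions argument the paper uses to prove compactness of $T_1$ in Theorem \ref{existenceThm} (and of the operator $A:g\mapsto\omega(g)$ in the bijectivity lemma), a bounded set of directions $h$ produces linearized solutions $\dot\rho$ bounded in $\{u\in\L^2(0,T;H^1(\Omega\times Y)):\partial_tu\in\L^2\}$, which embeds compactly into $\L^2(0,T;H^s(\Omega\times Y))$ for $1/2<s<1$, on which the $\Gamma_N$-trace is continuous. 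A compact operator between infinite-dimensional spaces is never bounded below: if $\|F'(k^*)h\|\ge c\|h\|$ held, the images of an orthonormal sequence $(e_n)$ would remain $c\sqrt{2}$-separated, contradicting precompactness of the image of the unit ball. For the same reason your contradiction--compactness device cannot close: the normalized sequence $h_n$ converges weakly to some limit $h$, compactness gives $F'(k^*)h_n\to F'(k^*)h$ strongly, injectivity gives $h=0$ --- and there is no contradiction, because weak convergence does not preserve the norm; $h_n\rightharpoonup0$ with $\|h_n\|=1$ is entirely possible. Injectivity of a compact linearization (even granting the lateral parabolic unique continuation you invoke, which is a far heavier tool than the problem requires) can never be upgraded to a Lipschitz lower bound; this is precisely the ill-posedness that makes such inverse problems delicate.

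The paper evades this obstruction by never linearizing the $\Gamma_N$-trace map. It instead linearizes the co-normal \emph{flux on the inaccessible boundary} $\Gamma_R$: the operator $N(g)=-D\nabla_y\omega(g)\cdot n_y$, which by the Robin condition equals $-g\rho(k^*)-k^*R\omega(g)$, i.e.\ the multiplication operator $g\mapsto -g\rho(k^*)$ perturbed by a compact term. The hypothesis $\rho(k^*)\ge c_0>0$ is used exactly here --- it makes the multiplication part bounded below --- and injectivity of $N$ requires only the elementary uniqueness of the homogeneous Neumann problem (zero flux on all of $\partial Y$ and zero initial data force $\omega\equiv0$), not unique continuation; a Fredholm/open-mapping argument then yields boundedness of the inverse. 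The part entirely missing from your proposal is the conversion of flux control on $\Gamma_R$ into trace control on $\Gamma_N$: the paper does this through the Robin boundary condition, the equivalent $H^1(Y)$-norm of Proposition \ref{equivalenSob} (gradient plus $\Gamma_N$-trace), the energy estimates of Proposition \ref{energy1} for the difference system, and the $\e$-scaling of the whole problem (using \ref{scale}), under which every term except the $\Gamma_N$-measurement acquires an extra factor of $\e$ and is absorbed into the left-hand side $(1-C\e)\|k_2-k_1\|_\e$. Your instinct that the ``accessible-to-inaccessible propagation'' is the main obstacle was correct, but the mechanism that overcomes it is the multiplicative structure of the Robin flux together with the scaling argument, not observability or unique continuation.
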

\begin{remark}
The discussion around Theorem \ref{mainTeo} can be extended to the case of recovering micro-macro Robin coefficient with a genuine two-scale structure, e.g. $k\in\L^2(\Omega;\L^2(\Gamma_R))$ or $k\in\L^2(0,T;\L^2(\Omega;\L^2(\Gamma_R)))$. In this case, two-scale measurements are needed. To keep the presentation as simple as possible, we focus our attention on $k\in\mathcal{K}$.
\end{remark}
In the rest of this section, we prove establish several lemmata. The proof of Theorem \ref{mainTeo} is given in Section \ref{sectionMainTeo}.

\begin{lemma}
For any $k\in\mathcal{K}$ and $d\in\L^\infty(\Gamma_R)$ let $(\pi(k),\rho(k))$ be the solution to (\ref{generalProblem}) and $(u,v)=(u(k),v(k))$ the solution to
\begin{equation}
\label{frechet1}
\begin{cases}
-\Delta_xu=F(u,v) & \mbox{ in }\Omega\\
\partial_tv-D\Delta_yv = 0 & \mbox{ in }\Omega\times Y\\
-D\nabla_yv\cdot n_y+k(Rv-u)=d(\pi(k)-p_F-R\rho(k))&\mbox{ on } \Omega\times\Gamma_R\\
-D\nabla_yv\cdot n_y=0&\mbox{ on }\Omega\times\Gamma_N\\
u=0&\mbox{ at }\partial\Omega\\
v(0,x,y)=0& \mbox{ in }\Omega\times Y,
\end{cases}
\end{equation}
where $F(u,v)$ is specified below. Then $\rho(k)$ is continuously Fr\'{e}chet differentiable and its derivative $\rho'(k)d$ at $d\in\L^\infty(\Gamma_R)$ is given by $v(k)$.
\end{lemma}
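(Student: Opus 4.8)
The plan is to produce the derivative by hand as the solution operator of the sensitivity system \eqref{frechet1}, to show that it reproduces $\rho$ to first order, and then to verify continuity in $k$. Here $F(u,v)$ is the linearization of $f$ about the base state, $F(u,v)=f_\pi(\pi(k),\rho(k))\,u+f_\rho(\pi(k),\rho(k))\,v$, which is legitimate because the standing regularity and positivity hypotheses of this section keep $(\pi(k),\rho(k))$ in a region where $f$ is differentiable with bounded coefficients. I would first observe that \eqref{frechet1} is a \emph{linear} coupled elliptic--parabolic system of exactly the form treated in Proposition \ref{energy1}, with zero initial datum and a prescribed Robin forcing $g$ on $\Omega\times\Gamma_R$ obtained by linearizing the data $k(\pi+p_F-R\rho)$ in $k$. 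Since $d\in\L^\infty(\Gamma_R)$ and $\pi(k),\rho(k)$ have traces on $\Gamma_R$ by Proposition \ref{sub:trace_theorem}, one has $g\in\L^2(0,T;\L^2(\Omega;\L^2(\Gamma_R)))$ with $\|g\|\le C\|d\|_{\L^\infty(\Gamma_R)}$, so \eqref{frechet1} is uniquely solvable and \eqref{en2} gives
\[
\|u\|^2_{\L^2(0,T;H^1(\Omega))}+\|v\|^2_{\L^2(0,T;\L^2(\Omega;H^1(Y)))}\le C\|d\|^2_{\L^\infty(\Gamma_R)}.
\]
By uniqueness $d\mapsto(u,v)$ is linear, so $\rho'(k)\colon d\mapsto v(k)$ is a well-defined bounded linear operator; this is the candidate derivative.

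Next I would estimate the remainder. Writing $\delta\pi:=\pi(k+d)-\pi(k)$, $\delta\rho:=\rho(k+d)-\rho(k)$ and $q:=\delta\pi-u$, $w:=\delta\rho-v$, I subtract the weak forms of \eqref{generalProblem} at $k+d$ and at $k$ and then subtract \eqref{frechet1}. The pair $(q,w)$ then solves a system of the same type as \eqref{frechet1}, but now with genuinely second-order data: a Robin forcing $d(\delta\pi-R\delta\rho)$ and the Taylor remainder of $f$, which is $O(|\delta\pi|^2+|\delta\rho|^2)$. Theorem \ref{stability1}, applied with $\delta A=\delta D=\delta\rho_I=0$ and $\delta k=d$, yields $\|\delta\pi\|+\|\delta\rho\|\le C\|d\|_{\L^2(\Gamma_R)}$; together with the trace bound of Proposition \ref{sub:trace_theorem} this controls the second-order data by $C\|d\|_{\L^\infty(\Gamma_R)}\|d\|_{\L^2(\Gamma_R)}$. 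Feeding this into the energy estimate \eqref{en2} gives $\|w\|\le C\|d\|_{\L^\infty(\Gamma_R)}\|d\|_{\L^2(\Gamma_R)}=o(\|d\|_{\L^\infty(\Gamma_R)})$, which is exactly the statement that $\rho'(k)d=v(k)$ is the Fr\'echet derivative.

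For continuity of $k\mapsto\rho'(k)$ I would fix $\|d\|_{\L^\infty(\Gamma_R)}\le1$ and compare $v(k_1)$ with $v(k_2)$. Their difference solves a linear system of the type \eqref{frechet1} whose forcing collects $(k_1-k_2)$, the differences of base states $\pi(k_1)-\pi(k_2)$ and $\rho(k_1)-\rho(k_2)$, and the difference of the linearized coefficients in $F$; all of these are dominated by $\|k_1-k_2\|_{\L^2(\Gamma_R)}$ via Theorem \ref{stability1}. A further application of \eqref{en2} then produces $\|\rho'(k_1)-\rho'(k_2)\|_{\mathrm{op}}\le C\|k_1-k_2\|_{\L^2(\Gamma_R)}$, so $\rho'$ is continuous and $\rho$ is continuously Fr\'echet differentiable.

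The main obstacle is the remainder step: one must know that the Taylor remainder of $f$ is truly quadratic rather than merely $o(|\delta\pi|+|\delta\rho|)$ in an $\L^2$-integrated sense. Assumptions \ref{scale}--\ref{secondARG} only provide homogeneity, a Lipschitz-type bound and a growth bound, so the argument rests on the base state staying in a region where $f$ has bounded, locally Lipschitz derivatives -- precisely where the positivity of $\rho$ and the improved regularity $\rho_I\in H^1(\Omega;H^d(Y))$ assumed in this section are used, together with Sobolev embeddings to pass from $H^1$ to the $\L^4$-type norms needed to integrate $|\delta\pi|^2+|\delta\rho|^2$. A secondary technical point is the uniform treatment of the $\Gamma_R$-integrals, which requires choosing the parameter in Proposition \ref{sub:trace_theorem} small enough to absorb the $\nabla_y$-contributions into the dissipative term of the energy identity.
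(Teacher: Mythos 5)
Your overall skeleton matches the paper's: form the remainder pair, note that it solves a system of the type \eqref{frechet1} whose Robin datum $d(\delta\pi-R\delta\rho)$ is a product of two first-order small quantities, bound $(\delta\pi,\delta\rho)$ by $C\|d\|$, and feed this into the energy estimate of Proposition \ref{energy1} to get an $O(\|d\|^2)$ remainder. The genuine gap is in your specification of $F$ and the bulk term. You set $F(u,v)=f_\pi(\pi(k),\rho(k))u+f_\rho(\pi(k),\rho(k))v$ and claim the Taylor remainder of $f$ is $O(|\delta\pi|^2+|\delta\rho|^2)$; but the paper's only hypotheses on $f$ are the homogeneity \ref{scale} and the integrated Lipschitz/growth bounds \ref{structuralAssump}--\ref{secondARG}. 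These do not give differentiability of $f$, let alone bounded or Lipschitz derivatives (homogeneous nonlinearities of the type \ref{scale} with $0<\alpha,\beta<1$ are typically non-differentiable where an argument vanishes). You acknowledge this as ``the main obstacle'' and try to source the needed smoothness from the positivity of $\rho(k^*)$ and the improved regularity $\rho_I\in H^1(\Omega;H^d(Y))$, but those hypotheses play entirely different roles in the paper: the lower bound $\rho(k^*)\ge c_0$ is used only to divide by $\rho(k^*)$ in the invertibility argument for the operator $N$ built on \eqref{inverse1}, and in the final norm comparison in Theorem \ref{mainTeo}; the extra regularity of $\rho_I$ only yields $\L^\infty(\Gamma_R)$ control of traces there. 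Moreover the lemma is stated at an arbitrary $k\in\mathcal{K}$, where no positivity of $\rho(k)$ is assumed at all, so there is no ``region where $f$ is differentiable with bounded coefficients'' to invoke.

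The paper sidesteps this entirely: the unspecified $F$ in \eqref{frechet1} is taken to be the \emph{exact difference} $F=f_1$ with $f_1(u_1,v_1)=f(\pi(k+d),\rho(k+d))-f(\pi(k),\rho(k))$, and the remainder pair $(U,V)=(u_1-u,v_1-v)$ then carries the bulk term $f_2(U,V)=f_1(u_1,v_1)-f_1(u,v)$; both $f_1$ and $f_2$ inherit the hypotheses of Proposition \ref{energy1} directly from \ref{structuralAssump}--\ref{secondARG}, so no Taylor expansion is ever performed. All the smallness then comes from the product structure of the boundary data, exactly as in your boundary bookkeeping: Proposition \ref{energy1} gives $\|V\|\le C\|d\|_{\L^\infty(\Gamma_R)}\|u_1-Rv_1\|_{\L^2(0,T;\L^2(\Omega;\L^2(\Gamma_R)))}$, and a second application of Proposition \ref{energy1} together with the interpolation-trace inequality (Proposition \ref{sub:trace_theorem}) shows $\|(u_1,v_1)\|=O(\|d\|_{\L^\infty(\Gamma_R)})$, so the quotient tends to zero. (Your alternative of getting the first-order bound from Theorem \ref{stability1} is fine.) To repair your proof, replace the linearized $F$ by this difference nonlinearity, or add explicit $C^{1,1}$-type hypotheses on $f$, which the paper does not make. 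Your Lipschitz estimate for $k\mapsto\rho'(k)$ has the same defect, since it compares linearized coefficients; the paper claims only continuity and defers that argument to \cite{jiang16}.
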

\begin{proof}
One can observe that the well-posedness of (\ref{frechet1}) follows by similar arguments as in the previous sections. 
Take $k\in\mathcal{K}$ and $d\in\L^\infty(\Gamma_R)$ such that $k+d\in\mathcal{K}$.

Note first that $(u_1(k),v_1(k))=(\pi(k+d)-\pi(k),\rho(k+d)-\rho(k))$ solves
\begin{equation}
\nonumber
\begin{cases}
-\Delta_xu_1=f_1(u_1,v_1) & \mbox{ in }\Omega\\
\partial_tv_1-D\Delta_yv_1 = 0 & \mbox{ in }\Omega\times Y\\
-D\nabla_yv_1\cdot n_y+k(Rv_1-u_1)=d(\pi(k+d)-p_F-R\rho(k+d))&\mbox{ on } \Omega\times\Gamma_R\\
-D\nabla_yv_1\cdot n_y=0&\mbox{ on }\Omega\times\Gamma_N\\
u_1=0&\mbox{ at }\partial\Omega\\
v_1(0,x,y)=0&\mbox{ in }\Omega\times Y,
\end{cases}
\end{equation}
where $f_1(u_1,v_1)=f(\pi(k+d),\rho(k+d))-f(\pi(k),\rho(k))$. Denote by $F=f_1$ and
\begin{equation}
\nonumber
U=u_1-u=\pi(k+d)-\pi(k)-u,\quad V=v_1-v=\rho(k+d)-\rho(k)-v,
\end{equation}
then $(U,V)$ solves the problem
\begin{equation}
\nonumber
\begin{cases}
-\Delta_xU=f_2(U,V) & \mbox{ in }\Omega\\
\partial_tV-D\Delta_yV = 0 & \mbox{ in }\Omega\times Y\\
-D\nabla_yV\cdot n_y+k(RV-U)=d(u_1-Rv_1)&\mbox{ on } \Omega\times\Gamma_R\\
-D\nabla_yV\cdot n_y=0&\mbox{ on }\Omega\times\Gamma_N\\
U=0&\mbox{ at }\partial\Omega\\
V(0,x,y)=0&\mbox{ in }\Omega\times Y,
\end{cases}
\end{equation}
where $f_2(U,V)=f_1(u_1,v_1)-f_1(u,v)$.
Note that the nonlinearities $f_1,f_2$ satisfiy the conditions of the energy estimate Proposition \ref{energy1}. Thus,
\begin{align}
\nonumber
\|V\|^2_{\L^2(0,T;\L^2(\Omega,H^1(Y)))}\le C\|d\|^2_{\L^\infty(\Gamma_R)}\|u_1-Rv_1\|^2_{\L^2(\L^2(0,T;\L^2(\Omega;\L^2(\Gamma_R)))}.
\end{align}
Whence,
\begin{align}
\nonumber
&\frac{\|\rho(k+d)-\rho(k)-v\|_{\L^2(0,T;\L^2(\Omega,H^1(Y)))}}{\|d\|_{\L^\infty(\Gamma_R)}}\le\\
\nonumber
&\le C\left(\|u_1\|_{\L^2(\L^2(0,T;\L^2(\Omega;\L^2(\Gamma_R)))}+\|v_1\|_{\L^2(\L^2(0,T;\L^2(\Omega;\L^2(\Gamma_R)))}\right),
\end{align}
and it is sufficient to show that the right-hand side above tends to 0 as $\|d\|_{\L^\infty(\Gamma_R)}\rightarrow0$. Using the interpolation-trace inequality, we obtain
\begin{equation}
\nonumber
\|v_1\|_{\L(0,T;\L^2(\Omega;\L^2(\Gamma_R)))}\le C\|v_1\|_{\L^2(0,T;\L^2(\Omega,H^1(Y)))}.
\end{equation}
Furthermore, using Proposition \ref{energy1} and the interpolation-trace inequality again, we obtain
\begin{align}
\nonumber
&\|u_1\|^2_{\L^2(0,T;H^1(\Omega))}+\|v_1\|^2_{\L^2(0,T;\L^2(\Omega,H^1(Y)))}\le\\
\nonumber
&C\|d\|^2_{\L^\infty(\Gamma_R)}\|\pi(k+d)-p_F-R\rho(k+d)\|^2_{\L^2(0,T;\L^2(\Omega,\L^2(\Gamma_R)))}\le C'\|d\|^2_{\L^\infty(\Gamma_R)}
\end{align}
from which follows that
\begin{equation}
\nonumber
\lim_{d\rightarrow0}\frac{\|\rho(k+d)-\rho(k)-v\|_{\L^2(0,T;\L^2(\Omega,H^1(Y)))}}{\|d\|_{\L^\infty(\Gamma_R)}}=0.
\end{equation}
The proof of continuity follows by a similar argument, we refer to the discussion in \cite{jiang16}.
\end{proof}

We proceed now in a similar fashion as in e.g. \cite{choulli04,jiang16}. 

Let $g\in\L^2(\Gamma_R)$ and $(\theta,\omega)=(\theta(g),\omega(g))$ be the weak solution to the system
\begin{equation}
\label{inverse1}
\begin{cases}
-\Delta_x\theta=f(\theta,\omega) & \mbox{ in }\Omega\\
\partial_t\omega-D\Delta_y\omega = 0 & \mbox{ in }\Omega\times Y\\
-D\nabla_y\omega\cdot n_y+k^*R\omega=-g\rho(k^*)&\mbox{ on } \Omega\times\Gamma_R\\
-D\nabla_y\omega\cdot n_y=0&\mbox{ on }\Omega\times\Gamma_N\\
\theta=0&\mbox{ at }\partial\Omega\\
\omega(0,x,y)=0&\mbox{ in }\Omega\times Y
\end{cases}
\end{equation}
For $g\in\L^2(\Gamma_R)$, define the operator
\begin{equation}
\nonumber
N:\L^2(\Gamma_R)\rightarrow\L^2(0,T;\L^2(\Omega;\L^2(\Gamma_R)))
\end{equation}
by
\begin{equation}
\nonumber
N(g)=-D\nabla_y\omega(g)\cdot n_y.
\end{equation}
Then $N$ is a bounded linear operator (boundedness follow from energy estimates).
\begin{lemma}
The operator $N$ is bijective and $\|N^{-1}\|$ is finite.
\end{lemma}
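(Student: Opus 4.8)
The plan is to read $N$ through the Robin condition in \eqref{inverse1}. Since $-D\nabla_y\omega\cdot n_y = -g\,\rho(k^*) - Rk^*\omega(g)$ on $\Omega\times\Gamma_R$, the operator has the explicit form
\[
N(g) = -g\,\rho(k^*) - Rk^*\,\omega(g)\big|_{\Gamma_R},
\]
and it is bounded and linear, as already observed. To reach \emph{both} halves of the claim I would separate them: injectivity together with a lower bound $\|N(g)\|\ge c\|g\|_{\L^2(\Gamma_R)}$ gives closedness of the range and $\|N^{-1}\|\le 1/c$ on that range; surjectivity onto the stated codomain I would then extract from the closed range theorem by proving that the cokernel is trivial, i.e. $\ker N^*=\{0\}$.

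For the lower bound I would split $N=M+K$ with $(Mg)=-g\,\rho(k^*)$ and $(Kg)=-Rk^*\,\omega(g)|_{\Gamma_R}$. The multiplication operator $M$ is bounded below: using $\rho(k^*)\ge c_0>0$ on $[0,T]\times\Omega\times\Gamma_R$ one gets $\|Mg\|^2_{\L^2(0,T;\L^2(\Omega;\L^2(\Gamma_R)))}\ge c_0^2\,T\,|\Omega|\,\|g\|^2_{\L^2(\Gamma_R)}$. The correction $K$ I expect to be \emph{compact}: by Proposition \ref{parabolic} and the extra regularity $\rho_I\in H^1(\Omega,H^d(Y))$, the state $\omega(g)$ (with zero initial datum) is bounded in a space carrying $y$-smoothness and, via Theorem \ref{sub:aubin_lions}, time-compactness, so the trace $g\mapsto\omega(g)|_{\Gamma_R}$ factors through a compact embedding into $\L^2(0,T;\L^2(\Omega;\L^2(\Gamma_R)))$. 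Then $M$ bounded below plus $K$ compact forces $N$ to have closed range and finite-dimensional kernel. A direct injectivity argument closes this half: if $N(g)=0$ then $g\rho(k^*)=-Rk^*\omega(g)$, which I would insert into the energy identity for $\omega$ (as in Proposition \ref{energy1}) and combine with a Gr\"onwall/positivity argument using $\omega(0)=0$ to force $\omega\equiv0$, hence $g\rho(k^*)\equiv0$ and, by $\rho(k^*)\ge c_0$, $g=0$. Closed range together with $\ker N=\{0\}$ yields the desired lower bound and the finiteness of $\|N^{-1}\|$ on the range.

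Surjectivity onto the full codomain is the step I expect to be the main obstacle, and I would attack it through the adjoint. With the range already closed, $N$ is onto exactly when $\ker N^*=\{0\}$, so I would compute $N^*$ by the adjoint-state method: for $h$ in the codomain, introduce the backward parabolic dual of \eqref{inverse1}, test it against $\omega(g)$, and thereby rewrite $\langle N(g),h\rangle$ as $\langle g,N^*h\rangle_{\L^2(\Gamma_R)}$ with $N^*h$ an explicit boundary functional of $h$ and the adjoint state. Proving $\ker N^*=\{0\}$ then reduces to a uniqueness/density statement for this dual problem, which I would establish by mirroring the Gr\"onwall-and-positivity argument above, run backward in time. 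This is the delicate point: it is precisely here that the positivity $\rho(k^*)\ge c_0$ and the smallness of $a$ must be used to rule out nonzero $h$ orthogonal to the whole range, and I would expect the bulk of the technical work — and the only genuinely nonroutine estimate — to sit in verifying the triviality of this cokernel.
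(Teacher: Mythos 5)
Your first half is sound and, in substance, coincides with the paper's own mechanism. The paper rewrites $N(g)=\varphi$ via the Robin condition as $\varphi/\rho(k^*)=(\mathcal{O}-I)(g)$ with $\mathcal{O}(g)=-k^*R\,\omega(g)/\rho(k^*)$, proves $\mathcal{O}$ compact (as the composition of the compact solution map $A:g\mapsto\omega(g)$ with a bounded multiplication), and shows $1$ is not an eigenvalue by exactly your injectivity argument: if the flux vanishes on $\Gamma_R$ then $\omega$ solves the homogeneous Neumann problem with $\omega(0)=0$, the energy identity forces $\omega\equiv0$, whence $g\,\rho(k^*)=0$ and $g=0$ by the positivity $\rho(k^*)\ge c_0$. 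Your decomposition $N=M+K$, the lower bound $\|Mg\|^2\ge c_0^2\,T\,|\Omega|\,\|g\|^2_{\L^2(\Gamma_R)}$, the compactness of $K$ via Proposition \ref{parabolic} and Theorem \ref{sub:aubin_lions}, and the standard contradiction argument (bounded below plus compact plus injective implies bounded below) give closed range and $\|N(g)\|\ge c\,\|g\|_{\L^2(\Gamma_R)}$ --- which is the only content of the lemma actually consumed in Section \ref{sectionMainTeo}, where one needs $\|(\sigma_\e'(k^*))^{-1}\|$ finite on the range of the linearization.

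The genuine gap is your surjectivity step, and it is not merely the ``main obstacle'' --- it is blocked by your own setup. The range of $M$ has \emph{infinite} codimension in $\L^2(0,T;\L^2(\Omega;\L^2(\Gamma_R)))$: one checks that $h\perp\mathrm{ran}\,M$ exactly when $\int_0^T\int_\Omega\rho(k^*)\,h\,dx\,dt=0$ for a.e.\ $y\in\Gamma_R$, one scalar constraint per $y$ on the $(t,x)$-dependence, so the orthogonal complement is infinite dimensional. Thus $M$ is upper semi-Fredholm with extended index $-\infty$, and since the semi-Fredholm index is invariant under the compact perturbation $K$, the operator $N=M+K$ also has closed range of infinite codimension. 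Consequently $\ker N^*$ is infinite dimensional, and no backward-dual uniqueness argument can ever show it trivial: your closed-range-theorem route terminates in a false goal, and $N$ cannot be onto the stated codomain. (The same index obstruction exposes why the paper's own surjectivity step is delicate: the Fredholm alternative is invoked there for $\mathcal{O}-I$, but $\mathcal{O}$ maps $\L^2(\Gamma_R)$ into the strictly larger space $\L^2(0,T;\L^2(\Omega;\L^2(\Gamma_R)))$, so $\mathcal{O}-I$ is not an endomorphism and the alternative does not apply as written.) The correct repair is to claim only what you have already proved: $N$ is injective with closed range and $N^{-1}:\mathrm{ran}\,N\to\L^2(\Gamma_R)$ is bounded, i.e.\ $\|N(g)\|\ge c\,\|g\|_{\L^2(\Gamma_R)}$; restated this way, your first half is a complete proof of everything Theorem \ref{mainTeo} needs, and you should drop the adjoint-state program entirely.
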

\begin{proof}
We prove the surjectivity of $N$. Assume that $\varphi\in\L^2(0,T;\L^2(\Omega;\L^2(\Gamma_R)))$, we must prove that there exists $g\in\L^2(\Gamma_R)$ such that $N(g)=\varphi$. Using (\ref{inverse1}), we obtain
\begin{equation}
\nonumber
\varphi+k^*R\omega(g)=-g\rho(k^*),
\end{equation}
or, equivalently,
\begin{equation}
\label{N1}
\frac{\varphi}{\rho(k^*)}+g=-\frac{k^*R\omega(g)}{\rho(k^*)}.
\end{equation}
Define
\begin{equation}
\nonumber
\mathcal{O}:\L^2(\Gamma_R)\rightarrow\L^2(0,T;\L^2(\Omega;\L^2(\Gamma_R))).
\end{equation}
by
\begin{equation}
\nonumber
\mathcal{O}(g)=-\frac{k^*R\omega(g)}{\rho(k^*)}
\end{equation}
Then we have
\begin{equation}
\nonumber
\frac{\varphi}{\rho(k^*)}=(\mathcal{O}-I)(g).
\end{equation}
Note further that $\mathcal{O}=BA$, where
$$
A:g\mapsto\omega(g),\quad B:q\mapsto-\frac{k^*Rq}{\rho(k^*)}.
$$
We have seen that $A$ is compact and $B$ is clearly continuous.
Hence, $\mathcal{O}$ is compact. 

We claim now that $1$ is not an eigenvalue to $\mathcal{O}$. Then, by the Fredholm alternative theorem, $\mathcal{O}-I$ is invertible and
$$
g=(\mathcal{O}-I)^{-1}\left(\frac{\varphi}{\rho(k^*)}\right).
$$
To prove that 1 is not an eigenvalue of $\mathcal{O}$, assume that $\mathcal{O}(g)=g$ for some $g\in\L^2(\Gamma_R)$. It follows from (\ref{N1}) that
$\varphi/\rho(k^*)=0$, so $\varphi=N(g)=0$. Hence, $-D\nabla_y\omega(g)\cdot n_y=0$ on $[0,T]\times\Omega\times\Gamma_R$. Since $\omega(g)$ solves (\ref{inverse1}) it also solves
\begin{equation}
\label{inverse2}
\begin{cases}
-\Delta_x\theta=f(\theta,\omega) & \mbox{ in }\Omega\\
\partial_t\omega-D\Delta_y\omega = 0 & \mbox{ in }\Omega\times Y\\
-D\nabla_y\omega\cdot n_y=0&\mbox{ on }\Omega\times\partial Y\\
\theta=0&\mbox{ at }\partial\Omega\\
\omega(0,x,y)=0&\mbox{ in }\Omega\times Y.
\end{cases}
\end{equation}
Hence, $\omega(g)=0$, but then $-g\rho(k^*)=0$ from the Robin boundary condition of (\ref{inverse1}), and since $\rho(k^*)\ge c_0>0$, we get $g=0$. In other words, $1$ is not an eigenvalue of $\mathcal{O}$. In conclusion, $N$ is invertible. Since $N$ is bounded, bijective and linear, the open mapping theorem ensures that $N^{-1}$ exists and is bounded.
\end{proof}

\section{Proof of Theorem \ref{mainTeo}}
\label{sectionMainTeo}
We are now ready to prove our main result.
\begin{proof}[Proof of Theorem \ref{mainTeo}]
Let $\e>0$ and consider the scaled problem
\begin{equation}
\label{eq:scaledInverse1}
\begin{cases}
-\Delta_x\xi=f(\xi,\zeta) & \mbox{ in }\Omega\\
\partial_t\zeta-D\Delta_y\zeta = 0 & \mbox{ in }\Omega\times Y\\
-D\nabla_y\zeta\cdot n_y+kR\zeta=k(\xi+\e p_F)&\mbox{ on } \Omega\times\Gamma_R\\
-D\nabla_y\zeta\cdot n_y=0&\mbox{ on } \Omega\times\Gamma_N\\
\xi=0&\mbox{ at }\partial\Omega\\
\zeta(0,x,y)=\e\rho_I&\mbox{ in }\Omega\times Y.
\end{cases}
\end{equation}
Recall that we have $f(\e u,\e v)=\e f(u,v)$. From this it follows that the solution $(\xi^\e,\zeta^\e)$ to the above problem satisfies $(\xi^\e,\zeta^\e)=(\e\pi,\e\rho)$. Note that $\zeta^\e(k)=\e\zeta(k)\ge\e c_0>0$ on $[0,T]\times\Omega\times\Gamma_R$. 
Define the norm $\|\cdot\|_\e$ on $\L^2(\Gamma_R)$ by
\begin{equation}
\nonumber
\|g\|_\e=\left\|\frac{1}{\zeta^\e(k^*)}g\right\|_{\L^2(0,T;\L^2(\Omega,\L^2(\Gamma_R)))}.
\end{equation}

Further, define the mapping 
\begin{equation}
\nonumber
\sigma_\e:\mathcal{K}\rightarrow\L^2(0,T;\L^2(\Omega;\L^2(\Gamma_R))),
\end{equation}
by $\sigma_\e(k)=D\nabla_y(\zeta^\e)\cdot n_y$. It follows from the fact that $\zeta^\e(k)$ is Frechet differentiable with continuous derivative that $\sigma_\e$ is a $C^1$-diffeomorphism. We have
\begin{equation}
\label{sigmadef}
\sigma_\e'(k)g=D\nabla_y \zeta_1^\e(k^*,g)\cdot n_y
\end{equation}
where $\zeta_1^\e(k^*,g)$ is the solution to
\begin{equation}
\nonumber
\begin{cases}
-\Delta_x\xi_1^\e=f(\xi_1^\e,\zeta_1^\e(k^*,g)) & \mbox{ in }\Omega\\
\partial_t\zeta_1^\e(k^*,g)-D\Delta_y\zeta_1^\e(k^*,g)= 0 & \mbox{ in }\Omega\times Y\\
-D\nabla_y\zeta_1^\e(k^*,g)\cdot n_y+kR\zeta_1^\e(k^*,g)=-g\zeta^\e(k^*)&\mbox{ on } \Omega\times\Gamma_R\\
-D\nabla_y\zeta_1^\e(k^*,g)\cdot n_y=0&\mbox{ on } \Omega\times\Gamma_N\\
\xi_1^\e=0&\mbox{ at }\partial\Omega\\
\zeta_1^\e(k^*,g)(0,x,y)=0&\mbox{ in }\Omega\times Y.
\end{cases}
\end{equation}
Since $\zeta^\e(k^*)=\e\rho(k^*)$, it follows from (\ref{inverse1}) that $\zeta_1^\e(k^*,g)=\e\omega(g)$ and by $\sigma_\e'(k^*)g=\e N$ by (\ref{sigmadef}). It follows that $(\sigma'_\e(k^*)g)^{-1}=N^{-1}/\e$. Since $\sigma_\e'(k^*)g$ is a $C^1$-diffeomorphism, there exists a neighbourhood $N(k^*,a)$ such that for any $k_1,k_2\in N(k^*,a)$ we have
\begin{equation}
\nonumber
\|k_2-k_1\|_\e\le2\|(\sigma_\e(k^*)g^{-1})'\|\|\sigma'_\e(k_2)g-\sigma'_\e(k_1)g\|_{\L^2(0,T;\L^2(\Gamma_R))}
\end{equation}
(see the discussion in \cite{jiang16}).
We have
\begin{eqnarray}
\nonumber
\|k_2-k_1\|_\e&\le&2\|(\sigma_\e(k^*)g^{-1})'\|\|D\nabla_y\zeta^\e(k_2)\cdot n_y-D\nabla_y\zeta^\e(k_1)\cdot n_y\|_{\L^2(0,T;\L(\Omega;\L^2(\Gamma_R)))}\\
\nonumber
&\le&\frac{C}{\e}\|D\nabla_y\zeta^\e(k_2)\cdot n_y-D\nabla_y\zeta^\e(k_1)\cdot n_y\|_{\L^2(0,T;\L(\Omega;\L^2(\Gamma_R)))}.
\end{eqnarray}
Using (\ref{eq:scaledInverse1}), one obtains the estimate
\begin{align}
\nonumber
&\|D\nabla_y\zeta^\e(k_2)\cdot n_y-D\nabla_y\zeta^\e(k_1)\cdot n_y\|_{\L^2(0,T;\L(\Omega;\L^2(\Gamma_R)))}\\
\nonumber
&\le C\left(\|\zeta^\e(k_2)-\zeta^\e(k_1)\|_{\L^2(0,T;\L^2(\Omega;\L^2(\Gamma_R)))}+\|\xi^\e(k_2)-\xi^\e(k_1)\|_{\L^2(0,T;\L^2(\Omega))}\right)
\\
\nonumber
&+C\sum_{j=1}^2\|\zeta^\e(k_j)(k_2-k_1)\|_{\L^2(0,T;\L^2(\Omega;\L^2(\Gamma_R)))}.
\end{align}
By the interpolation-trace inequality, we have
\begin{equation}
\nonumber
\|\zeta^\e(k_2)-\zeta^\e(k_1)\|_{\L^2(0,T;\L^2(\Omega;\L^2(\Gamma_R)))}\le C\|\zeta^\e(k_2)-\zeta^\e(k_1)\|_{\L^2(0,T;\L^2(\Omega;H^1(Y)))}.
\end{equation}
Further, by the Poincar\'{e} inequality,
\begin{equation}
\nonumber
\|\xi^\e(k_2)-\xi^\e(k_1)\|_{\L^2(0,T;\L^2(\Omega))}\le C\|\xi^\e(k_2)-\xi^\e(k_1)\|_{\L^2(0,T;H^1(\Omega))} .
\end{equation}
Hence, we obtain
\begin{eqnarray}
\nonumber
\|k_2-k_1\|_\e&\le& \frac{C}{\e}\Bigg[\|\xi^\e(k_2)-\xi^\e(k_1)\|_{\L^2(0,T;H^1(\Omega))}+\|\zeta^\e(k_2)-\zeta^\e(k_1)\|_{\L^2(0,T;\L^2(\Omega;H^1(Y)))}\\
\nonumber
&+&\left.\sum_{j=1}^2\|\zeta^\e(k_j)(k_2-k_1)\|_{\L^2(0,T;\L^2(\Omega;\L^2(\Gamma_R)))}\right].
\end{eqnarray}
We have
\begin{equation}
\nonumber
\sum_{j=1}^2\|\zeta^\e(k_j)(k_2-k_1)\|_{\L^2(0,T;\L^2(\Omega;\L^2(\Gamma_R)))}\le C\|k_2-k_1\|_\e\sum_{j=1}^2\|\zeta^\e(k_j)\zeta^\e(k^*)\|_{\L^2(0,T;\L^2(\Omega;\L^\infty(\Gamma_R)))}
\end{equation}
Since $\Gamma_R$ is $(d-1)$-dimensional, we have
$$
\|\zeta^\e(k)\|_{\L^\infty(\Gamma_R)}\le c\|\zeta^\e(k)\|_{H^{d-1}(\Gamma_R)}\le c\|\zeta^\e(k)\|_{H^d(Y)}
$$
by Proposition \ref{trace} and Proposition \ref{sobolev}. Further, by Proposition \ref{parabolic} and the assumption $\rho_I\in \L^2(\Omega;H^{d-1}(Y))$, we have $\zeta^\e(k)\in \L^2(0,T;H^1(\Omega;H^d(Y)))$. Hence,
\begin{align}
\nonumber
\|\zeta^\e(k^*)\zeta^\e(k_1)\|_{\L^2(0,T;\L^2(\Omega;\L^\infty(\Gamma_R)))}&\le c\|\zeta^\e(k^*)\|_{\L^2(0,T;\L^2(\Omega;H^{d-1}(\Gamma_R)))}\|\zeta^\e(k_j)\|_{\L^2(0,T;\L^2(\Omega;H^{d-1}(\Gamma_R)))}\\
\nonumber
&\le c\|\zeta^\e(k^*)\|_{\L^2(0,T;\L^2(\Omega;H^d(Y)))}\|\zeta^\e(k_j)\|_{\L^2(0,T;\L^2(\Omega;H^d(Y)))}\\
\nonumber
&\le c\e^2\|\rho(k^*)\|_{\L^2(0,T;\L^2(\Omega;H^d(Y)))}\|\rho(k_j)\|_{\L^2(0,T;\L^2(\Omega;H^d(Y)))}\\
\nonumber
&\le C'\e^2.
\end{align}
By the above estimates, we can rely on
\begin{align}
\nonumber
(1-C\e)\|k_2-k_1\|_\e\le\frac{C}{\e}\left(\|\xi^\e(k_2)-\xi^\e(k_1)\|_{\L^2(0,T;H^1(\Omega))}+\|\zeta^\e(k_2)-\zeta^\e(k_1)\|_{\L^2(0,T;\L^2(\Omega;H^1(Y)))}\right). 
\end{align}
Using the equivalent norm on $\L^2(0,T;\L^2(\Omega;H^1(Y)))$ given by Proposition \ref{equivalenSob}, we obtain
\begin{eqnarray}
\nonumber
(1-C\e)\|k_2-k_1\|_\e&\le&\frac{C}{\e}\left(\|\xi^\e(k_2)-\xi^\e(k_1)\|_{\L^2(0,T;H^1(\Omega))}+\|\nabla_y(\zeta^\e(k_2)-\zeta^\e(k_1))\|_{\L^2(0,T;\L^2(\Omega;H^1(Y)))}\right)\\
\nonumber
&+&\frac{C}{\e}\|\zeta^\e(k_2)-\zeta^\e(k_1)\|_{\L^2(0,T;\L^2(\Omega,\L^2(\Gamma_N)))}.
\end{eqnarray}

Set $X^\e:=\xi^\e(k_2)-\xi^\e(k_1)$ and $Z^\e:=\zeta^\e(k_2)-\zeta^\e(k_1)$, then $(X^\e,Z^\e)$ solves
\begin{equation}
\nonumber
\begin{cases}
-\Delta_xX^\e=f_1(X^\e,Z^\e) & \mbox{ in }\Omega\\
\partial_tZ^\e-D\Delta_yZ^\e= 0 & \mbox{ in }\Omega\times Y\\
-D\nabla_yZ^\e\cdot n_y+k_1RZ^\e=(k_2-k_1)\zeta^\e(k_2)&\mbox{ on } \Omega\times\Gamma_R\\
-D\nabla_yZ^\e\cdot n_y=0&\mbox{ on } \Omega\times\Gamma_N\\
X^\e=0&\mbox{ at }\partial\Omega\\
Z^\e(0,x,y)=0&\mbox{ in }\Omega\times Y,
\end{cases}
\end{equation}
where $f_1(X^\e,Z^\e)=f(\xi^\e(k_2),\zeta^\e(k_2))-f(\xi^\e(k_1),\zeta^\e(k_1))$.

Using Proposition \ref{energy1} and similar estimates as above, we obtain 
\begin{align}
\nonumber
&\|\xi^\e(k_2)-\xi^\e(k_1)\|_{\L^2(0,T;H^1(\Omega))}+\|\zeta^\e(k_2)-\zeta^\e(k_1)\|_{\L^2(0,T;\L^2(\Omega;H^1(Y)))}\\
\nonumber
&\le C\|(k_2-k_1)\zeta^\e(k_2)\|_{\L^2(0,T;\L^2(\Omega;\L^2(\Gamma_R)))}\le C\e^2\|k_2-k_1\|_\e.
\end{align}
This finally yields the crucial estimate
\begin{align}
\nonumber
(1-C'\e)\|k_2-k_1\|_\e\le\frac{C}{\e}\|\zeta^\e(k_2)-\zeta^\e(k_1)\|_{\L^2(0,T;\L^2(\Omega,\L^2(\Gamma_N)))}.
\end{align}
Choose $\e^*>0$ such that $1-C'\e^*=1/2$ and use that $\zeta^\e=\e\rho$, then we obtain
$$
\|k_2-k_1\|_{\e^*}\le C\|\rho(k_2)-\rho(k_1)\|_{\L^2(0,T;\L^2(\Omega,\L^2(\Gamma_N)))}.
$$
Finally, since $\zeta^*(k)\ge\e c_0$, we obtain that
\begin{equation}
\nonumber
\|k_2-k_1\|_{\L^2(\Gamma_R)}\le C\|k_2-k_1\|_{\e^*}\le C\|\rho(k_2)-\rho(k_1)\|_{\L^2(0,T;\L^2(\Omega,\L^2(\Gamma_N)))}.
\end{equation}

\end{proof}

\bibliographystyle{plainnat}

\bibliography{literature}

\end{document}